\newcommand{\termKcolor}{NavyBlue}
\newcommand{\termJcolor}{VioletRed}
\newcommand{\alert}[2][RoyalBlue]{{\color{#1}#2}}
\newcommand{\defaultAmbientSpace}{X}
\newcommand{\defaultlcIndex}{\sigma}
\newcommand{\setDefaultlcIndex}[1]{\renewcommand{\defaultlcIndex}{#1}}
\newcommand{\defaultcohDegree}{q}
\newcommand{\defaultlclocus}{S}
\newcommand{\defaultvphi}{\vphi_L}
\newcommand{\defaultpsi}{\psi}
\newcommand{\defaultMetric}{\omega}
\NewDocumentCommand{\Ltwo}{ %% the space of L2 sections 
  D//{\bullet,\bullet}      %% #1 the order of forms
  D<>{defaultAmbientspace}  %% #2 base space
  s                         %% #3 base space is hidden if * is present
  m                         %% #4 coefficient
}{L^{#1}_{(2)}\paren{\IfBooleanF{#3}{#2;} #4}}
\NewDocumentCommand{\Ltwosp}{
  D//{\defaultcohDegree}               %% anti-holomorphic degree
  t{M}                 %% include M in the coefficient if present
  D||{\defaultvphi}           %% potential on line bundle
    % s                %% with * for no log pole weight 
  o % O{\eta_\sigma}   %% weight of log poles
  D<>{\defaultMetric}  %% metric on the base space
}{\Ltwo/n,#1/*{D \otimes F \IfBooleanT{#2}{\otimes M}}_{#3, \IfNoValueF{#4}{#4,} #5}}
\NewDocumentCommand{\Harm}{ %% the space of harmonic forms
  D//{\defaultcohDegree}  %% #1 anti-holomorphic degree
  D||{\defaultvphi}              %% #2 potential on line bundle
    % s                   %% with * for no log pole weight 
  o % O{\eta_\sigma}      %% #3 weight of log poles
  D<>{\defaultMetric}     %% #4 metric on the base space
}{\mathcal{H}^{n,#1}_{#2, \IfNoValueF{#3}{#3,} #4}}
\NewDocumentCommand{\lcIndex}{ %% for displaying the lc index,
                               %% intended to be used internally
  m  %% #1 the basic lc index (\sigma)
  m  %% #2 amount added to the index
  m  %% #3 amount substracted from the index
}{#1\IfNoValueF{#2}{+#2}\IfNoValueF{#3}{-#3}}
\NewDocumentCommand{\lcData}{ %% for displaying lc data in the format
                              %% like "(\vphi_L ; m_k . \psi)"
  G{\defaultvphi}  %% #1 potential or q-psh function
  O{\defaultpsi}   %% #2 lc locus psi function
  e{.}             %% #3 jumping number
}{\paren{#1; \IfNoValueF{#3}{#3 \cdot} #2}}
\NewDocumentCommand{\lcdata}{ %% for displaying lc data in the
                              %% format like "(X,\vphi_L,\psi,m_k)"
  s                %% #1 no parentheses if started 
  d<>              %% #2 base space
  G{\defaultvphi}  %% #3 potential or q-psh function
  O{\defaultpsi}   %% #4 lc locus psi function
  e{.,}            %% #5 jumping number
                   %% #6 extra components
}{\newcommand{\datalist}{\IfNoValueF{#2}{#2,}#3,#4\IfNoValueF{#5}{,#5}\IfNoValueF{#6}{,#6}}
\IfBooleanTF{#1}{\datalist}{\paren{\datalist}}}
\newcommand{\spHbase}{\mathcal{H}}
\NewDocumentCommand{\spH}{ %% cohomology group with coefficients 
                           %% vanishing on \lcc[\sigma]
  D//{\defaultcohDegree}  %% #1 degree of anti-hol form
  t{M}                    %% #2 with 'M' to display M in the coefficient
  s                       %% #3 star for turning to the mlc adjoint ideal sheaf
  D||{\defaultlcIndex}    %% #4 codim of the lcc defined by the upper ideal
  t{.}                    %% #5 with '.' to display a quotient ideal
  D||{#4 -1}              %% #6 codim of the lcc defined by the lower ideal
  d()                     %% #7 the sheaf replacing the ideal sheaf if non-empty
}{\spHbase^{#1}\IfNoValueTF{#7}{
    \begingroup%
    \newcommand{\upidl}{\IfBooleanTF{#3}{
        \mtidlof{\vphi_{F \IfBooleanT{#2}{\otimes M}}}
      }{\aidlof|#4|{\vphi_{F \IfBooleanT{#2}{\otimes M}}}}
    }% 
    \paren{\IfBooleanT{#2}{M\otimes}
      \IfBooleanTF{#5}{
        \frac{\upidl}{\aidlof|#6|{\vphi_{F \IfBooleanT{#2}{\otimes M}}}}
      }{\upidl}}
    \endgroup%
  }{\paren{\IfBooleanT{#2}{M\otimes}#7}}}
\DeclareMathOperator{\lc}{lc} %% lc centre
\NewDocumentCommand{\lcc}{ %% union of lc centres
                           %% of codimension \sigma
                           %% of (X,D) %%
  D||{\defaultlcIndex}       %% #1 lc index \sigma
  e{+-}                      %% #2,#3
  D<>{\defaultAmbientSpace}  %% #4 base space
  t{'}                       %% #5 '-ed to show lc locus instead of
                             %%    lc data pair
  D(){\defaultlclocus}       %% #6 lc locus 
}{\lc_{#4}^{\lcIndex{#1}{#2}{#3}}\IfBooleanTF{#5}{\paren{#6}}{\lcData}}
\NewDocumentCommand{\lcS}{  %% a local lc centre
  s                       %% #1 symbol with \rs when starred
  D(){\defaultlclocus}    %% #2 symbol for the subvariety
  D||{\defaultlcIndex}    %% #3 codimension
  e{+-}                   %% #4,#5
  O{p}                    %% #6 index among the \sigma-lc centres
}{\mathtt{\IfBooleanT{#1}{\rs} #2}^{\lcIndex{#3}{#4}{#5}}_{#6}}
\NewDocumentCommand{\PRes}{ %% Poincare Residue map
  O{}      %% subvariety
  d()      %% forms from the domain
}{\mathcal R_{#1}\IfNoValueF{#2}{\paren{#2}}}
\newcommand{\defidlof}[1]{\mathcal{I}_{#1}}  %% defining ideal of (a set)
\NewDocumentCommand{\mtidlof}{   %% multiplier ideal of (a potential)
  O{}      %% #1 base space (for compatibility)
  D<>{#1}  %% #2 base space
  m        %% #3 potential / psh function
}{\multidl_{#2}\paren{#3}} 
\NewDocumentCommand{\residlof}{  %% multiplier ideal sheaf on the
                                 %% union of \sigma-lc centres
  D||{\defaultlcIndex}   %% #1 codim of lc centres or supporting lc
                         %%    locus
  e{+-}                  %% #2,#3
  d<>                    %% #4 base space
  s                      %% #5 display the symbol without arguments when starred
  %%% input to \lcData
  % G{\defaultvphi}      %% #6 potential or q-psh function
  % O{\defaultpsi}       %% #7 lc locus psi function
  % e{.}                 %% #8 jumping number  
}{\sheaf R_{\IfNoValueTF{#4}{}{#4,} \lcIndex{#1}{#2}{#3}}\IfBooleanF{#5}{\lcData}}
\NewDocumentCommand{\Adjidlof}{
  D||{\defaultlcIndex}       %% #1 codim of lc centres under concern
  D<>{\defaultAmbientSpace}  %% #2 base space
  D(){\defaultlclocus}       %% #3 lc locus
  m                          %% #4 potential or ideal
}{\operatorname{\mathit{Adj}}^{#1}_{\paren{#2,#3}}\paren{#4}}
\NewDocumentCommand{\aidlof}{
  D||{\defaultlcIndex}   %% #1 codim of lc centres under concern
  e{+-}                  %% #2,#3
  d<>                    %% #4 base space
  s                      %% #5 display the symbol without arguments when starred
  %%% input to \lcData
  % G{\defaultvphi}        %% #6 potential or ideal
  % O{\defaultpsi}         %% #7 defining function of the lc locus
  % e{.}                   %% #8 jumping number
}{\sheaf{J}_{\!\IfNoValueTF{#4}{}{#4,} \lcIndex{#1}{#2}{#3}}\IfBooleanF{#5}{\lcData}}
\NewDocumentCommand{\lcV}{ %% measure on lc centres
  D||{\defaultlcIndex}    %% #1 codim of supporting lc centres
  D//{\defaultvphi}       %% #2 potential for bundle valued section
  d()                     %% #3 metric on the ambient space
  e{^}                    %% #4 jumping number
  O{\defaultpsi}          %% #5 defining function (potential) of subvariety 
}{\:d\operatorname{lcv}^{#1\IfNoValueF{#4}{,\paren{#4}}}_{\IfNoValueF{#3}{#3,}#2}\left[#5\right]}
\NewDocumentCommand{\Ohvol}{ %% Ohsawa measure %%
  D//{\defaultvphi} %% #1 potential for bundle valued section
  d()               %% #2 metric on the ambient space
  O{\defaultpsi}    %% #3 defining function of subvariety
}{\dvol_{\IfNoValueF{#2}{#2,}#1}\left[#3\right]}
\newcommand{\dvol}{\:d\vol}
\newcommand{\RTFsym}{\mathfrak{F}} 
\NewDocumentCommand{\RTF}{ %% residue transform function
  s          %% #1 adding \smash[t] when starred
  G{\RTFsym} %% #2 symbol body
  d//        %% #3 for adding superscript k for k-RTF
  o          %% #4 general superscript
  >{\SplitArgument{1}{,}} d<> %% #5 superscript in inner product
  d||        %% #6 superscript in \abs{}^2
  d()        %% #7 for adding variable (\eps)
  o          %% #8 subscript for the codimension \sigma
}{%
  \begingroup%
    \newif\ifboolup%
    \booluptrue%
    \IfNoValueT{#4}{\IfNoValueT{#5}{\IfNoValueT{#6}{\boolupfalse}}}%
    \IfNoValueT{#7}{\boolupfalse}%
    \newcommand{\srptstr}{\cramped{{}^{\IfNoValueF{#4}{#4}\IfNoValueF{#5}{\inner#5}\IfNoValueF{#6}{\abs{#6}^2}}%
      \ifboolup _
      \fi{\ifboolup\displaystyle\fi\IfNoValueF{#7}{\paren{#7}}\IfNoValueF{#8}{%
          \ifboolup {\scriptstyle #8} \else _{#8} \fi%
        }}}}%
    \ifboolup%
      \IfBooleanTF{#1}{
        \smash[t]{
          \IfNoValueF{#3}{{}^{#3}}#2\raisebox{\depthof{$\srptstr$} * \real{0.3}}{$\srptstr$}%
        }%
      }{\IfNoValueF{#3}{{}^{#3}}#2\raisebox{\depthof{$\srptstr$} * \real{0.3}}{$\srptstr$}}%
    \else%
      \IfNoValueF{#3}{{}^{#3}}#2\srptstr%
    \fi%
  \endgroup%
}
\NewDocumentCommand{\mtlog}{O{e} d() D||{\defaultpsi}}{\log\!#1^{\paren{#2}}\abs{#3}}
\NewDocumentCommand{\slog}{O{e} D||{\defaultpsi}}{\log\abs{#1 #2}}
\NewDocumentCommand{\dlog}{O{e} D||{\defaultpsi}}{\mtlog[#1](2)|#2|}
\NewDocumentCommand{\logpole}{ %% log-pole in the residue transform
                               %% function
  D||{\defaultpsi}       %% #1 log singularity defining function
  D//{\defaultlcIndex}   %% #2 codim of lc centres in question
  E{.^}{{e}{1+\eps}}     %% #3 multiplicative constant in logarithm 
                         %% #4 exponent in the log-psi term
  s                      %% #5 no parentheses and exponent on log|\psi| when starred
}{\abs{#1}^{#2} \IfBooleanTF{#5}{\slog[#3]|#1|}{\paren{\slog[#3]|#1|}^{#4}}}
\DeclareFontFamily{OMX}{MnSymbolE}{}
\DeclareSymbolFont{MnLargeSymbols}{OMX}{MnSymbolE}{m}{n}
\DeclareFontShape{OMX}{MnSymbolE}{m}{n}{
    <-6>  MnSymbolE5
   <6-7>  MnSymbolE6
   <7-8>  MnSymbolE7
   <8-9>  MnSymbolE8
   <9-10> MnSymbolE9
  <10-12> MnSymbolE10
  <12->   MnSymbolE12
}{}
\DeclareFontShape{OMX}{MnSymbolE}{b}{n}{
    <-6>  MnSymbolE-Bold5
   <6-7>  MnSymbolE-Bold6
   <7-8>  MnSymbolE-Bold7
   <8-9>  MnSymbolE-Bold8
   <9-10> MnSymbolE-Bold9
  <10-12> MnSymbolE-Bold10
  <12->   MnSymbolE-Bold12
}{}
\DeclareMathDelimiter{\llangle}{\mathopen}%
{MnLargeSymbols}{'164}{MnLargeSymbols}{'164}
\DeclareMathDelimiter{\rrangle}{\mathclose}%
{MnLargeSymbols}{'171}{MnLargeSymbols}{'171}
\NewDocumentCommand{\idxup}{ %% operator for raising indices via a
                             %% hermitian metric on X
  m                  %% #1 the differential form whose indices to be raised
  O{\defaultMetric}  %% #2 the hermitian metric on X
  s                  %% #3 smash the vertical spacing on the top of the metric if present
}{\paren{#1}^{\mathrlap{\!\IfBooleanTF{#3}{\smash[t]{#2}}{#2}}\makebox[\maxof{\widthof{$#2$}-\widthof{$\!\omega$}}{0pt}]{}}}
\NewDocumentCommand{\dep}{t{;} d<> O{\nu} m}{#4\IfBooleanTF{#1}{_}{^}{\IfNoValueF{#2}{#2\:}(#3)}}
\NewDocumentCommand{\sm}{s m}{{#2}\IfBooleanTF{#1}{_}{^}\text{sm}}
\NewDocumentCommand{\lelong}{m O{x}}{\operatorname{\boldsymbol{\nu}}\paren{#1,#2}}
\NewDocumentCommand{\rs}{ %% putting ~ on objects on the
                          %% log-resolution %%
  s  %% when * is given, \smash[t] is applied
  m  %% the main object 
}{\IfBooleanTF{#1}{\smash[t]{\widetilde{#2}}}{\widetilde{#2}}}
\DeclareMathOperator{\Ann}{Ann}  %% Annihilator 
\DeclareMathOperator{\mlc}{mlc} %% minimal lc centre
\newcommand{\sect}[1][s]{\mathtt{#1}} %% canonical section
\newcommand{\bphi}{\boldsymbol{\vphi}}
\NewDocumentCommand{\cbn}{  %% group of combinations
  D//{\defaultlcIndex_V}
  D||{\defaultlcIndex}
}{\mathfrak{C}^{#1}_{#2}} 
\NewDocumentCommand{\Iset}{  %% index set for lc centres on log-resolution
  D||{\defaultlcIndex}    %% #1
  e{+-}                   %% #2,#3
  O{\defaultlclocus}      %% #4
  d()                     %% #5 open set on which the index set is valid
}{I^{\lcIndex{#1}{#2}{#3}}_{#4}\IfNoValueF{#5}{\paren{#5}}} 
  \newtheorem{prop}{Proposition}[subsection]
  \newtheorem{thm}[prop]{Theorem}
  \newtheorem{cor}[prop]{Corollary}
  \newtheorem{conjecture}[prop]{Conjecture}
  \theoremstyle{remark}
  \newtheorem{remark}[prop]{Remark}
  \theoremstyle{definition}
  \newtheorem{definition}[prop]{Definition}
  \numberwithin{equation}{subsection}
\begin{document}

%%%%%
%%%%% File name  : titleinfo.tex
%%%%% Author     : Mario Chan
%%%%% Date       : 17th August, 2022
%%%%% Description: This file contains the info needed for maketitle
%%%%%              for the project "Adjoint-idl-sheaves-proceedings".
%%%%%
%%
%%%
\newcommand{\titlestr}{%
  Residue functions and Extension problems%
}

\newcommand{\shorttitlestr}{%
  Residue functions and Extension problems%
}

\newcommand{\MCname}{Tsz On Mario Chan}
\newcommand{\MCnameshort}{Mario Chan}
\newcommand{\MCemail}{mariochan@pusan.ac.kr}

\newcommand{\addressstr}{%
  Dept.~of Mathematics, Pusan National
  University, Busan 46241, South Korea%
}

\newcommand{\subjclassstr}[1][,]{%
  32J25 (primary)#1  %% Transcendental methods of algebraic geometry (complex-analytic aspects)
  % 32Q15#1           %% Kähler manifolds
  14B05 (secondary)%   %% Singularities in algebraic geometry
}

\newcommand{\keywordstr}[1][,]{%
  adjoint ideal sheaf#1
  multiplier ideal sheaf#1
  lc centre#1
  $L^2$ extension%
}

\newcommand{\dedicatorystr}{%
}

\newcommand{\thankstr}{%
  The author would like to thank Prof.~Kang-Tae Kim for his invitation
  to speak in the POSTECH Conference 2022 on Complex Analytic Geometry.
  % The author would like to thank Young-Jun Choi for his support and
  % encouragement on publishing this work.
  % His thanks also go to Chen-Yu Chi for drawing his attention to this
  % topic during his stay at the NCTS, and to an anonymous referee for
  % correcting a reference in an early version of this paper.
  % The theoretical basis of this work started to take shape when the
  % author visited Jean-Pierre Demailly in \textfr{Institut Fourier},
  % whom the author feels so in debt to and will never be able to repay.
  This work was partly supported by the National Research Foundation (NRF) of
  Korea grant funded by the Korea government (No.~2018R1C1B3005963)% 
}

%%% Local Variables:
%%% mode: latex
%%% TeX-master: "Adjoint-idl-sheaves-proceedings"
%%% End:

\title[\shorttitlestr]{\titlestr}
 
\author[\MCnameshort]{\MCname}
\email{\MCemail}
% \address{\addressstr}
% \curraddr{}

% \author{\YJname}
% \email{\YJemail}
% \address{\addressstr}

\thanks{\thankstr}
 
\subjclass[2020]{\subjclassstr}

\keywords{\keywordstr}

% \dedicatory{dedicatorystr}

\begin{abstract}
  %%%%%
%%%%% File name  : abstract.tex
%%%%% Author     : Mario Chan
%%%%% Date       : 19th October, 2022
%%%%% Description: Abstract of the project
%%%%%              "Adjoint-idl-sheaves-proceedings". 
%%%%%
%%
%%%

The ``qualitative'' extension theorem of Demailly guarantees existence
of holomorphic extensions of holomorphic sections on some subvariety
under certain positive-curvature assumption, but that comes without any
estimate of the extensions, especially when the singular locus of the
subvariety is non-empty and the holomorphic section to be extended
does not vanish identically there.
Residue functions are analytic functions which connect the $L^2$ norms
on the subvarieties (or their singular loci) to $L^2$ norms with
specific weights on the ambient space.
Motivated by the conjectural ``dlt extension'', this note discusses
the possibility of retrieving the $L^2$ estimates for the extensions in
the general situation via the use of the residue functions.
It is also shown in this note that the $1$-lc-measure defined via the
residue function of index $1$ is indeed equal to the Ohsawa measure in
the Ohsawa--Takegoshi $L^2$ extension theorem.

%%% Local Variables:
%%% mode: latex
%%% TeX-master: "Adjoint-idl-sheaves-proceedings"
%%% End:

\end{abstract} 

\date{\today} 

\maketitle

%%%%% End of Top matter %%%%%%%%%%

%%%%%
%%%%% File name  : main-body.tex
%%%%% Author     : Mario Chan
%%%%% Date       : 17th August, 2022
%%%%% Description: This is the main structural body of the project
%%%%%              "Adjoint-idl-sheaves-proceedings".
%%%%%
%%
%%%

This note reviews the ``qualitative'' extension theorems obtained by
Demailly in \cite{Demailly_extension} and together with Junyan Cao and
Shin-ichi Matsumura in \cite{Cao&Demailly&Matsumura}, and then
discusses the possibility of retrieving the $L^2$ estimates for the
extensions.
This is the contents of Section \ref{sec:Demailly-extension}.
The residue functions introduced in \cite{Chan_adjoint-ideal-nas} by
the author are used to facilitate the re-establishment of the possible
estimates, which are discussed in Section \ref{sec:residue-functions}.

All results stated in this note have been proved somewhere else,
except for Proposition \ref{prop:1-lcV=Ohvol}, which states that the
$1$-lc-measure introduced in \cite{Chan&Choi_ext-with-lcv-codim-1} and
\cite{Chan_on-L2-ext-with-lc-measures} is indeed equal to the Ohsawa
measure in the Ohsawa--Takegoshi $L^2$ extension theorem, and
Corollary \ref{cor:1-aidl=Demailly-intermediate-multidl}, which
identifies the ad hoc ideal sheaf $\multidl'(m_{k-1})$ introduced by
Demailly in \cite{Demailly_extension}*{Def.~(2.11)} to be the adjoint
ideal sheaf of index $1$ introduced in \cite{Chan_adjoint-ideal-nas}.

\section{$L^2$ Extension theorem of Demailly}
\label{sec:Demailly-extension}

Based on the techniques developed through the Ohsawa--Takegoshi $L^2$
extension theorems (\cite{Ohsawa&Takegoshi-I}, \cite{Manivel},
\cite{Demailly_on_OTM-extension}, \cite{Siu_inv_plurigenera2}, ...),
Demailly proves in \cite{Demailly_extension} an extension theorem for
holomorphic sections on possibly non-reduced subvarieties defined by
some multiplier ideal sheaves on compact \textde{Kähler} manifolds.
More precisely, let
\begin{itemize}
\item $X$ be a compact \textde{Kähler} manifold, \footnote{The case
    where $X$ is a weakly pseudoconvex \textde{Kähler} manifold is
    also discussed in \cite{Demailly_extension}.}
  
\item $(L, e^{-\vphi_L})$ be a holomorphic line bundle over $X$
  equipped with a singular hermitian metric $e^{-\vphi_L}$,
  \footnote{The extension theorem of Demailly is also proved in
    \cite{Demailly_extension} for the case where $(L, e^{-\vphi_L})$
  is replaced by $(E,h)$, a holomorphic vector bundle $E$ over $X$
  equipped with a \emph{smooth} hermitian metric $h$.}
  
\item $\psi \leq -1$ be a bounded global function on $X$
   with neat analytic singularities.
\end{itemize}
Here a function $\vphi$ is said to have \emph{neat analytic
  singularities} if it is locally the difference $\vphi_1 - \vphi_2$
of quasi-plurisubharmonic (quasi-psh) functions of the form
\begin{equation*}
  \vphi_i = c_i \log\paren{\sum_{j=1}^N \abs{g_{ij}}^2} + \alpha_i \; ,
\end{equation*}
where $c_i \in \fieldR_{\geq 0}$, $g_{ij} \in \holo_X$ and $\alpha_i
\in \smooth_X$ for $i=1,2$.
The function $\vphi$ is said to have \emph{analytic singularities} if
the local functions $\alpha_i$ are bounded (need not be smooth).
% The ideal sheaf generated by all the holomorphic functions $g_{ij}$ is
% called the \emph{polar ideal sheaf of $\vphi$} and denoted by $\shfP_\vphi$.

Suppose that $\vphi_L$ and $\vphi_L +m\psi$ are quasi-psh for some $m
\geq 0$ so that
% $\vphi_L +m\psi$ are quasi-psh for all $m \geq m_0$ and
% consequently
the multiplier ideal sheaves $\mtidlof{\vphi_L}$ and $\mtidlof{\vphi_L+m\psi}$
are coherent.
% for $m \geq m_0$.
The subvarieties considered in \cite{Demailly_extension} (also in
\cite{Cao&Demailly&Matsumura}) are those defined by the annihilator of
$\frac{\mtidlof{\vphi_L}}{\mtidlof{\vphi_L +m \psi}}$, denoted by
$Y^{(m)}$.
(When $\vphi_L = 0$ and $\psi = \log\abs{z_1}^2$ on a coordinate
neighbourhood,
$\Ann_{\holo_X}\paren{\frac{\mtidlof{\vphi_L}}{\mtidlof{\vphi_L +m
      \psi}}} = \mtidlof{m \psi} =\defidlof{\set{z_1 = 0}}^{\floor
  m}$, where $\defidlof{\set{z_1 = 0}}$ is the defining ideal sheaf of
$\set{z_1 = 0}$ and $\floor m$ is the round-down of $m$.
The subvariety $Y^{(m)}$ is then non-reduced for $m \geq 2$.)

\subsection{Extension theorem without estimates}

The extension result is stated as follows.
\begin{thm}[\cite{Demailly_extension}*{Thm.~(2.14b)} and
  \cite{Cao&Demailly&Matsumura}*{Thm.~1.1}] \label{thm:Demailly-extension}
  Given any fixed real number $m > 0$,
  % some fixed integers $k \geq 1$ and $j \in \set{0, 1, \dots,
  %   k-1}$,
  suppose that there is a constant $\delta > 0$ such that
  \begin{equation*}
    \ibddbar\paren{\vphi_L +\beta \psi} \geq 0
    \quad\text{ for all } \beta \in [m, m +\delta]
  \end{equation*}
  in the sense of currents.
  Then the restriction morphism
  \begin{equation*}
    \cohgp 0[X]{K_X \otimes L \otimes \mtidlof{\vphi_L}}
    \to \cohgp 0[Y^{(m)}]{
      K_X \otimes L \otimes
      \frac{\mtidlof{\vphi_L}}{\mtidlof{\vphi_L +m\psi}}
    }
  \end{equation*}
  is surjective.
\end{thm}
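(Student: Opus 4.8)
The plan is to run the Ohsawa--Takegoshi $L^2$ machinery: convert the surjectivity into a $\bar\partial$-problem across the polar set of $\psi$, solve that $\bar\partial$-equation with the curvature hypothesis supplying the positivity (and the room to absorb the error terms), and cope with the non-reduced and singular features of $Y^{(m)}$ by an induction on a filtration of $\mtidlof{\vphi_L+m\psi}$ inside $\mtidlof{\vphi_L}$. Concretely, given a section $f$ of $K_X\otimes L\otimes\mtidlof{\vphi_L}/\mtidlof{\vphi_L+m\psi}$ over $Y^{(m)}$, I would cover $X$ by finitely many coordinate balls $U_j$ on which $f$ lifts to $\tilde f_j\in H^0\paren{U_j,\,K_X\otimes L\otimes\mtidlof{\vphi_L}}$; by the definition of $Y^{(m)}$ the differences $\tilde f_j-\tilde f_k$ lie in $\mtidlof{\vphi_L+m\psi}$ on the overlaps. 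Patching by a partition of unity $\set{\rho_j}$ produces a global smooth $L$-valued $(n,0)$-form $\tilde f=\sum_j\rho_j\tilde f_j$, and $g:=\bar\partial\tilde f=\sum_j\bar\partial\rho_j\wedge\paren{\tilde f_j-\tilde f_{j_0}}$ is a $\bar\partial$-closed $(n,1)$-form all of whose germs lie in $\mtidlof{\vphi_L+m\psi}$, so that $\int_X\abs{g}^2e^{-\vphi_L-m\psi}<\infty$. If one can solve $\bar\partial u=g$ with $u$ an $(n,0)$-form whose germs lie in $\mtidlof{\vphi_L+m\psi}$ --- for which it suffices that $\int_X\abs{u}^2e^{-\vphi_L-m\psi}\,w<\infty$ for some weight $w$ bounded away from $0$ --- then $F:=\tilde f-u$ is holomorphic, lies in $K_X\otimes L\otimes\mtidlof{\vphi_L}$ (as $\mtidlof{\vphi_L+m\psi}\subseteq\mtidlof{\vphi_L}$), and reduces modulo $\mtidlof{\vphi_L+m\psi}$ to $f$, i.e.\ restricts to $f$ on $Y^{(m)}$. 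That is exactly the asserted surjectivity.

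For the $\bar\partial$-equation I would work on $X':=X\setminus E$, with $E$ the polar set of $\psi$; after Demailly's regularization one may take $E$ to be a divisor, so $X'$ carries a complete \textde{Kähler} metric and the twisted Bochner--Kodaira--Nakano inequality is available there. The twisting is by a convex increasing function $\chi$ of $\psi$ together with a factor $\eta=\eta(\psi)$ and a compensating function $\lambda=\lambda(\psi)$, to be chosen so that the curvature operator governing the estimate takes the shape $\eta\cdot\ibddbar\paren{\vphi_L+\beta\psi}+\paren{\text{nonnegative Hessian terms in }\psi}$ and, simultaneously, $g$ stays admissible on the right-hand side while $u$ inherits exactly a bound $\int_X\abs{u}^2e^{-\vphi_L-m\psi}\,w<\infty$ with $w$ bounded away from $0$ on the left-hand side; the two requirements are reconcilable precisely because of the curvature slack over the interval. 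Here the hypothesis $\ibddbar\paren{\vphi_L+\beta\psi}\geq0$ for \emph{every} $\beta\in[m,m+\delta]$ makes the leading term nonnegative, and letting $\beta$ range over $(m,m+\delta]$ and then slide down to $m$ absorbs the error coming from $\bar\partial\eta$ --- the standard device from the sharp forms of the theorem. One then regularizes $\vphi_L$ and $\psi$ by potentials with neat analytic singularities, runs the estimate with a constant uniform along the regularization (the curvature hypothesis, posited on the whole closed interval, persists in the limit), and extracts a weak-$L^2$ limit; since $u$ has bidegree $(n,0)$ and satisfies $\bar\partial u=g$ off the measure-zero set $E$, it extends across $E$ and is genuinely the object wanted.

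I expect the main obstacle to be the case in which $Y^{(m)}$ is non-reduced and singular and $f$ does not vanish along the singular locus of $Y^{(m)}$: for non-reduced structures, ``being $L^2$ against $e^{-\vphi_L-m\psi}$'' and ``having germs in $\mtidlof{\vphi_L+m\psi}$'' are not the same condition, so the weight bookkeeping that keeps $g$ admissible while still forcing $u$ into the ideal is delicate and no single direct application of the estimate will do. I would handle this by interpolating between $\mtidlof{\vphi_L+m\psi}$ and $\mtidlof{\vphi_L}$ through a finite chain of coherent ideal sheaves indexed by the (finitely many) jumping numbers of $\beta\mapsto\mtidlof{\vphi_L+\beta\psi}$ on $[0,m]$ --- in Demailly's treatment the relevant intermediate object is the ad hoc ideal $\multidl'(m_{k-1})$ of \cite{Demailly_extension}*{Def.~(2.11)}, which by Corollary \ref{cor:1-aidl=Demailly-intermediate-multidl} is an adjoint ideal sheaf of index $1$ --- extending across one step of the filtration at a time, where the argument of the previous paragraph applies in its cleanest form (essentially the Ohsawa--Takegoshi step proper), and then iterating, the strong openness theorem ensuring that the multiplier ideals behave well under the limiting procedures at each stage. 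Assembling the local $\bar\partial$-solutions with this filtration argument then yields surjectivity over all of $X$.
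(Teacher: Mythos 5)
Your proposal is being measured against the proofs in \cite{Demailly_extension} and \cite{Cao&Demailly&Matsumura}, since this note quotes Theorem \ref{thm:Demailly-extension} without reproducing an argument. Your first two paragraphs do capture the actual mechanism, and in particular the decisive structural point: after patching local liftings by a partition of unity, the $\bar\partial$-datum $g=\sum_j\bar\partial\rho_j\wedge\paren{\tilde f_j-\tilde f_{j_0}}$ involves only \emph{differences} of liftings, hence has germs in the deeper ideal $\mtidlof{\vphi_L+m\psi}$ and finite $L^2$ norm against $e^{-\vphi_L-m\psi}$ by compactness of $X$; no norm of $f$ or of $\tilde f$ itself against the singular weight is ever needed. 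That is exactly why the theorem carries no Ohsawa-measure hypothesis.

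The gap is in how you close the argument, on two counts. First, the hypothesis gives only $\ibddbar\paren{\vphi_L+\beta\psi}\geq 0$, with no strict positivity, so on a compact manifold $\bar\partial u=g$ cannot be solved by one application of the twisted estimate: there is a harmonic obstruction, and the weights you describe yield a solution with $\int_X\abs{u}^2e^{-\vphi_L-m\psi}\,w<+\infty$ only for a $w$ that \emph{degenerates} along $\psi^{-1}(-\infty)$ (e.g.\ $w=\paren{\abs{\psi}^2+1}^{-1}$), which does not place $u$ in $\mtidlof{\vphi_L+m\psi}$. The cited proofs resolve this by solving a family of approximate equations (weights $\vphi_L+(m+\varepsilon)\psi$, equisingular regularizations), using the curvature over the whole interval $[m,m+\delta]$ to kill the obstruction in the limit, and invoking strong openness --- concretely, the stationarity of $\varepsilon\mapsto\mtidlof{\vphi_L+(m+\varepsilon)\psi}$ for small $\varepsilon>0$ --- to certify that the limit $u$ lies in $\mtidlof{\vphi_L+m\psi}$ exactly; you gesture at ``sliding $\beta$ down to $m$'' but never say how the limit lands in the right ideal, and that is the technical heart. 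Second, your fallback of iterating the quantitative Ohsawa--Takegoshi step (Theorem \ref{thm:Demailly-extension-with-estimate}) along the jumping-number filtration cannot prove the qualitative statement: that step only extends sections of $\frac{\multidl'(m_{k-1})}{\mtidlof{\vphi_L+m_k\psi}}$, i.e.\ those with finite Ohsawa norm, and $\multidl'(m_{k-1})$ is in general a \emph{proper} subsheaf of $\mtidlof{\vphi_L+m_{k-1}\psi}$ once $S$ is singular (it is the adjoint ideal sheaf of index $1$, by Corollary \ref{cor:1-aidl=Demailly-intermediate-multidl}, whereas the full ideal is only recovered at index $\sigma_{\mlc}$). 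Sections not vanishing on $\Sing(S)$ fall outside it, and extending precisely those without any norm hypothesis is the entire content of the theorem, as Section \ref{sec:ext-thm-with-estimates} stresses; so this reduction silently reintroduces the hypothesis the theorem is designed to remove. The correct route is the one in your first two paragraphs, pushed through the limiting argument just described.
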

\begin{remark}
  \cite{Cao&Demailly&Matsumura}*{Thm.~1.1} indeed states that the
  corresponding statement on holomorphically convex \textde{Kähler}
  manifold and for higher cohomology groups also holds true.
\end{remark}

This theorem, while being proved via $L^2$ method, does not require
(explicitly) the convergence of any $L^2$ norms of the sections to be
extended from the subvariety.
This is considered as an advantage since the \emph{Ohsawa measure} on the
subvariety $Y^{(m)}$, the measure appears in the estimate of the
Ohsawa--Takegoshi $L^2$ extension theorem (see \cite{Ohsawa-V} or
\cite{Demailly_extension}*{(2.4)}), \mmark{diverges in general around the
  singular locus of $Y^{(m)}$ (see \cite{KimDano&Seo_adj-idl} or
  \cite{KimDano-L2-ext-for-lc} for more discussion on the
  singularities on the Ohsawa measure)}{Reference to Dano's paper? \alert{Done.}}.
The classical theorem, which requires the convergence of sections with
respect to the Ohsawa measure, is deemed inapplicable to extend
sections non-vanishing on the singular locus.
It was hoped that this feature of Theorem \ref{thm:Demailly-extension}
can be exploited in order to solve the conjectural ``dlt extension'' in
\cite{DHP}*{Conj.~1.3}.
The hurdle is, while the sections which can be extended via Theorem
\ref{thm:Demailly-extension} have to sit inside a quotient of
multiplier ideal sheaves (namely,
$\frac{\mtidlof{\vphi_L}}{\mtidlof{\vphi_L+m\psi}}$), the conjecture of
the ``dlt extension'' demands the extension of sections which are not
confined in any multiplier ideal sheaves. 
One either has to show that the quotient of the multiplier ideal sheaves of
some suitably chosen potentials in the setup of the ``dlt extension''
is trivial, or has to improve Theorem \ref{thm:Demailly-extension} in
the specific case so that any holomorphic sections on the
corresponding $Y^{(m)}$ can also be extended.
At the time of writing, the author does not know any successful
attempt in the latter approach.
For the former approach, the known strategy involves the use of the
$L^2$ estimates with universal constant from the Ohsawa--Takegoshi
$L^2$ extension theorem (cf.~proof of the ``plt extension'' in
\cite{DHP}*{Thm.~1.7} or
\cite{Chan&Choi_ext-with-lcv-codim-1}*{Thm.~1.6.1}, which, roughly
speaking, corresponds to the case where the subvariety $Y^{(m)}$ is
smooth).
The ``universal constant'' here means that the multiplicative constant
involved in the estimate is independent of the involving sections and
metrics.

However, as a trade-off of the non-requirement of the convergence of
any $L^2$ norm on $Y^{(m)}$, Theorem \ref{thm:Demailly-extension}
does not provide any estimate for the extensions in general.
% The $L^2$ estimate with universal constant from the classical
% Ohsawa--Takegoshi theorem is useful \emph{in various
%   problems}{Refereces?} ().
In \cite{Demailly_extension}, it is shown that an $L^2$ estimate can
be obtained for extensions corresponding to successive jumping
numbers, which is explained below.

\subsection{Extension theorem with $L^2$ estimates}
\label{sec:ext-thm-with-estimates}

Thanks to the solution to the strong openness conjecture for psh
functions by Guan and Zhou (\cite{Guan&Zhou_openness}, see also
\cite{Guan&Zhou_effective_openness} and \cite{Hiep_openness}), under
the compactness assumption on $X$, there exists a strictly increasing
sequence of \emph{jumping numbers}
\begin{equation*}
  0 \leq m_0 < m_1 < \dotsm < m_k < \dotsm
\end{equation*}
such that, for each $k \in \Nnum$,
\begin{equation*}
  \mtidlof{\vphi_L +m_{k} \psi}
  \subsetneq
  \mtidlof{\vphi_L +m \psi}
  =\mtidlof{\vphi_L +m_{k-1} \psi}
  \quad\text{ on $X \;$ for all } m \in [m_{k-1}, m_k) \; .
\end{equation*}

\begin{remark}
  If $\vphi_L$ (as well as $\psi$) has only analytic singularities,
  then the sequence of jumping numbers $\seq{m_k}_{k \in \Nnum}$ has
  no accumulation point, as can be seen via a log-resolution of the
  polar ideal sheaves of $\vphi_L$ and $\psi$ (whose existence is
  guaranteed by Hironaka's result \cite{Hironaka}).
  The number $m_0$ can be set to $0$ and the sequence diverges to
  $+\infty$ in this case.
  When $\vphi_L$ has more general singularities, the sequence
  $\seq{m_k}_{k \in \Nnum}$ may converge to $\lim_{k \tendsto +\infty}
  m_k =: \dep[1]m_0 < +\infty$ (see
  \cite{Guan&Li_adjIdl-not-coherent} for an example of such $\vphi_L$
  (with $\psi :=\log\abs{z_1}^2$ and $\dep[1]m_0 = 1$) and
  \cite{KimDano&Seo_jumping-numbers}*{\S 5} for some further
  discussion).
  The strong openness property again guarantees the existence of
  another sequence of jumping numbers
  \begin{equation*}
    0 < \dep[1]m_0 < \dep[1]m_1 < \dotsm < \dep[1]m_k < \dotsm
  \end{equation*}
  satisfying the same property of $\seq{m_k}_{k\in\Nnum}$ on the
  family $\seq{\mtidlof{\vphi_L +m\psi}}_{m \in \fieldR_{\geq 0}}$ of
  multiplier ideal sheaves.
  Since only extensions corresponding to successive jumping numbers
  ($m_{k-1}$ and $m_k$ or $\dep[1]m_{k-1}$ and $\dep[1]m_k$) is under
  concern in this section, for the sake of generality, the number
  $m_0$ is not assumed to be $0$ as does in \cite{Demailly_extension}
  and it is assumed that $m_k$ is not an accumulation point of jumping
  numbers in what follows.
\end{remark}

The subvariety $S :=S^{(m_k)}$ ($\subset Y^{(m_k)}$) defined by
$\Ann_{\holo_X} \paren{\frac{ 
    \mtidlof{\vphi_L +m_{k-1} \psi}
  }{
    \mtidlof{\vphi_L +m_{k} \psi}
  }}$, which is the scheme-theoretic difference between
$Y^{(m_k)}$ and $Y^{(m_{k-1})}$, is \emph{reduced} (see
\cite{Demailly_extension}*{Lemma (4.2)}).
The statement of a ``quantitative'' extension in
\cite{Demailly_extension} is recalled as follows.

\begin{thm}[\cite{Demailly_extension}*{Thm.~(2.12a)}] \label{thm:Demailly-extension-with-estimate}
  Given a fixed jumping number $m_k$ (which is not an accumulation
  point) of the family $\set{\mtidlof{\vphi_L+m\psi}}_{m \in
    \fieldR_{\geq 0}}$, suppose that there is a constant $\delta > 0$
  such that one has the curvature assumption
  \begin{equation*}
    \ibddbar\paren{\vphi_L +\beta \psi} \geq 0
    \quad\text{ for all } \beta \in [m_k, m_k +\delta]
  \end{equation*}
  in the sense of currents.
  If $f \in \cohgp 0[S]{K_X \otimes L \otimes \frac{
      \mtidlof{\vphi_L +m_{k-1}\psi}
    }{
      \mtidlof{\vphi_L +m_{k}\psi}
    }}$ has finite $L^2$ norm with respect to the (generalised) Ohsawa
  measure, i.e.
  \begin{equation*}
    \int_S \abs{J^{m_k} f}^2 \Ohvol
    :=\lim_{t \tendsto -\infty} \;\;\smashoperator{\int\limits_{\set{t <
          \psi < t+1} \mathrlap{\subset X}}} \;\;
    \abs{\rs*f}^2 \:e^{-\vphi_L -m_k\psi} < +\infty \; ,
  \end{equation*}
  where $\rs f \in K_X \otimes L \otimes \smooth \cdot
  \mtidlof{\vphi_L +m_{k-1}\psi} (S)$ is some smooth extension of
  $f$, then there exists $F \in \cohgp 0[X]{K_X \otimes L \otimes
    \mtidlof{\vphi_L +m_{k-1}\psi}}$ which is a holomorphic extension
  of $f$, i.e.~$F \equiv f \mod \mtidlof{\vphi_L+m_k\psi}$ on $X$,
  such that
  \begin{equation*}
    \int_X \frac{\abs F^2 \:e^{-\vphi_L -m_k \psi}
    }{\abs{\delta \psi}^2 +1}
    \leq \frac{34}{\delta}
    \int_S \abs{J^{m_k} f}^2 \Ohvol \; .
  \end{equation*}
\end{thm}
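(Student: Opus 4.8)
The plan is to adapt the Ohsawa--Takegoshi--Manivel argument (as in \cite{Demailly_on_OTM-extension}, \cite{Manivel}) in its twisted Bochner--Kodaira--H\"ormander form, choosing the one-variable twist profiles so that the positivity window $\ibddbar\paren{\vphi_L+\beta\psi}\geq 0$ for $\beta\in[m_k,m_k+\delta]$ produces, at once, the admissibility of the $L^2$ estimate and, on its left-hand side, exactly the weight $1/\paren{\abs{\delta\psi}^2+1}$. The extension is then built in the classical way as $F=\rs f-u$, where $\rs f$ is a smooth representative of $f$ and $u$ solves a $\dbar$-equation obtained after truncating a collar around $S$.

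Before running the estimate I would reduce to a setting in which H\"ormander's $L^2$ theory applies verbatim, keeping every multiplicative constant \emph{universal}. Concretely: work on $X$ with a small neighbourhood of the poles of $\psi$ and $\vphi_L$ excised (a weakly pseudoconvex piece), replace $\vphi_L$ and $\psi$ by decreasing regularising sequences with analytic singularities preserving the curvature lower bounds, and invoke the strong openness theorem of Guan--Zhou \cite{Guan&Zhou_openness}: because $m_k$ is a jumping number \emph{and} not an accumulation point, there is a genuine interval around $m_k$ on which the family $\set{\mtidlof{\vphi_L+m\psi}}_m$ is locally constant on each side, so the regularisation can be arranged not to disturb $\mtidlof{\vphi_L+m_{k-1}\psi}$ or $\mtidlof{\vphi_L+m_k\psi}$. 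Then choose a global smooth section $\rs f$ of $K_X\otimes L$ with $\rs f\equiv f\bmod\mtidlof{\vphi_L+m_k\psi}$ along $S$, $\rs f\in\smooth\cdot\mtidlof{\vphi_L+m_{k-1}\psi}$ and $\dbar\rs f\in\smooth\cdot\mtidlof{\vphi_L+m_k\psi}$; such $\rs f$ exists by a partition of unity since $S$ is reduced.

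For the core step, introduce a truncation $\theta_t:=\theta(\psi-t)$, with $\theta$ equal to $1$ on $(-\infty,0)$ and to $0$ on $(1,+\infty)$ and $\abs{\theta'}\leq 2$, so that $\theta_t=1$ near $S$ and the collar $\set{t<\psi<t+1}$ carries $\dbar\theta_t$; and twist functions $\eta=\eta(\psi)$, $\lambda=\lambda(\psi)$ with $\eta$ growing towards $S$. Applying the twisted Bochner--Kodaira--H\"ormander inequality for $\dbar$ on $(n,1)$-forms with respect to $e^{-\vphi_L-m_k\psi}$, the curvature term $\eta\,\ibddbar\vphi_L-\ibddbar\eta-\lambda^{-1}\,i\,\partial\eta\wedge\dbar\eta$ is bounded below using $\ibddbar\vphi_L\geq-\beta\,\ibddbar\psi$ on $\beta\in[m_k,m_k+\delta]$; an elementary optimisation over the admissible profiles $\theta,\eta,\lambda$ yields the constant $34/\delta$. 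One then solves $\dbar u_t=g_t:=\dbar\paren{\theta_t\,\rs f}=\theta'(\psi-t)\,\dbar\psi\wedge\rs f+\theta_t\,\dbar\rs f$, with the resulting bound on $\int_X\abs{u_t}^2\,e^{-\vphi_L-m_k\psi}\big/\paren{\abs{\delta\psi}^2+1}$. The first piece of $g_t$ lives on the collar, and its suitably weighted square norm tends to $\int_S\abs{J^{m_k}f}^2\Ohvol$ as $t\tendsto-\infty$ by the definition of the (generalised) Ohsawa measure; the second piece, damped by $\theta_t$ and with $\dbar\rs f\in\smooth\cdot\mtidlof{\vphi_L+m_k\psi}$, tends to $0$ by dominated convergence. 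Setting $F_t:=\theta_t\,\rs f-u_t$ gives $\dbar F_t=0$, and since $g_t$ vanishes on $\set{\psi<t}\supset S$ for $t\ll 0$, the minimal solution $u_t$ is holomorphic near $S$. Extracting a weak limit $u$ of $\seq{u_t}$ and a locally uniform limit $F$ of $\seq{F_t}$ away from $S$, Riemann extension makes $F$ a global holomorphic section of $K_X\otimes L\otimes\mtidlof{\vphi_L+m_{k-1}\psi}$ with $F\equiv f\bmod\mtidlof{\vphi_L+m_k\psi}$ along $S$, and Fatou's lemma together with the above yields the stated estimate; a final passage to the limit removes the regularisation.

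The main obstacle, I expect, is the bookkeeping of this reduction to the smooth --- effectively classical Ohsawa--Takegoshi --- situation: unlike the model case where $S$ is a smooth divisor on which $f$ genuinely vanishes to a prescribed order, here $f$ lives in the quotient $\mtidlof{\vphi_L+m_{k-1}\psi}/\mtidlof{\vphi_L+m_k\psi}$ over a possibly singular reduced $S$ inside a non-reduced $Y^{(m_k)}$, and $\vphi_L$ may carry non-analytic singularities; one must show that all of this can be resolved without losing the universal constant and without perturbing the two multiplier ideals in play --- this is precisely where the hypothesis that $m_k$ is a non-accumulation jumping number does the real work, supplying the gap that both the equisingular approximation and the choice of $\rs f$ rely on. A second, more technical, point is to verify that $u_t$ actually lies in $\mtidlof{\vphi_L+m_k\psi}$ near $S$ --- so that $F\equiv f\bmod\mtidlof{\vphi_L+m_k\psi}$ genuinely holds --- even though the weight $1/\paren{\abs{\delta\psi}^2+1}$ degenerates there; this forces one to check that the collar term is the dominant part of $g_t$ and that the $\dbar\rs f$ contribution is negligible in the limit, which again leans on the gap at $m_k$.
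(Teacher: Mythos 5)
You should know at the outset that the note you are reading does not prove this theorem at all: it is quoted verbatim from \cite{Demailly_extension}*{Thm.~(2.12a)} and the article explicitly defers to that reference (only Proposition \ref{prop:1-lcV=Ohvol} and Corollary \ref{cor:1-aidl=Demailly-intermediate-multidl} are new). Measured against Demailly's actual argument, your strategy is the right one: a smooth extension $\rs f$ with $\dbar\rs f$ valued in $\smooth\cdot\mtidlof{\vphi_L+m_k\psi}$, a cut-off $\theta_t=\theta(\psi-t)$ concentrating $\dbar\paren{\theta_t\,\rs f}$ on the collar $\set{t<\psi<t+1}$, a twisted Bochner--Kodaira estimate whose admissibility is exactly the curvature window $\beta\in[m_k,m_k+\delta]$, the identification of the limiting collar contribution with the Ohsawa norm, and a diagonal limit in $t$ and in the equisingular regularisation of $\vphi_L$.

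The genuine gap is in your treatment of the congruence $F\equiv f\bmod\mtidlof{\vphi_L+m_k\psi}$. You propose to secure $u_t\in\mtidlof{\vphi_L+m_k\psi}$ near $S$ by checking that ``the collar term is the dominant part of $g_t$'' and that the $\dbar\rs f$ contribution is negligible; but that only controls the right-hand side of the a priori inequality and says nothing about the regularity of $u_t$ along $S$. The only information your scheme yields about $u_t$ near $S$ is that it is holomorphic there and that $\int\abs{u_t}^2e^{-\vphi_L-m_k\psi}/\paren{\abs{\delta\psi}^2+1}<+\infty$, and this weight is strictly too weak to force membership in $\mtidlof{\vphi_L+m_k\psi}$: already for $n=1$, $\vphi_L=0$, $\psi=\log\abs{z}^2$ and $m_k=1$, the function $u\equiv 1$ satisfies $\int\abs{u}^2e^{-\psi}/\paren{\abs{\psi}^2+1}<+\infty$ near the origin yet does not lie in $\mtidlof{\psi}=(z)$. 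The factor $\abs{\psi}^{-2}$ is exactly the borderline factor converting the divergent integral that defines the multiplier ideal into a convergent one --- in the language of this note, it is what separates $\mtidlof{\vphi_L+m_k\psi}$ from $\multidl'(m_{k-1})=\aidlof|1|{\vphi_L}.{m_k}$.

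In the actual proof the membership comes from the other side of the estimate: the twisting functions $\eta_t,\lambda_t$ are functions of $\psi$ that are frozen to a $t$-dependent constant on $\set{\psi<t}$, so that for each \emph{fixed} $t$ the a priori bound on $u_t$ carries the undamaged weight $e^{-\vphi_L-m_k\psi}$ on a neighbourhood of $S$ (with a constant depending on $t$); only the $t$-uniform envelope of $\paren{\eta_t+\lambda_t}^{-1}$ degenerates to $1/\paren{\abs{\delta\psi}^2+1}$, which is why the final estimate, but not the ideal membership, is expressed with that weight. One must then verify that the congruence class modulo the coherent sheaf $\mtidlof{\vphi_L+m_k\psi}$ survives the locally uniform limit $t\tendsto-\infty$ and the limit over the regularisation. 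Without this step your $F$ is only known to lie in $\mtidlof{\vphi_L+m_{k-1}\psi}$ and to agree with $f$ generically along $S$, which is weaker than the assertion of the theorem.
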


\begin{remark} \label{rem:extension-thm-with-1-lc-measure}
  In \cite{Chan&Choi_ext-with-lcv-codim-1}*{Thm.~1.4.5 and
    Thm.~3.4.1}, the estimate, at least in the case where $\vphi_L$
  has only neat analytic singularities, is improved to
  \begin{equation*}
    \int_X \frac{\abs F^2 \:e^{-\vphi_L -m_k \psi}
    }{\logpole//.\ell^2}
    \leq
    \int_S \abs{f}^2 \lcV|1|^{m_k} 
  \end{equation*}
  (see \cite{Chan_on-L2-ext-with-lc-measures}*{Remark 1.1.4} for an
  explanation for the slightly different form from
  \cite{Chan&Choi_ext-with-lcv-codim-1}*{Thm.~1.4.5} on the
  left-hand-side), where $\ell \geq e$ is a sufficiently large
  constant, depending only on $\delta$ and $\psi$, and the
  right-hand-side is the $L^2$ norm of $f$ with respect to the
  $1$-lc-measure introduced in \cite{Chan&Choi_ext-with-lcv-codim-1},
  which is also discussed in Section \ref{sec:residue-functions}.
  The constant $\ell$ is the multiplicative constant in the classical
  Ohsawa--Takegoshi extension theorem in disguise and is ``universal''
  in this case (it does not depend on $\vphi_L$, $m_k$ and the
  involving sections $F$ and $f$).
  It is shown in Proposition \ref{prop:1-lcV=Ohvol} below that, indeed,
  the $1$-lc-measure is equal to the Ohsawa measure.
\end{remark}

Let $\multidl'(m_{k-1})$ be the subsheaf of $\mtidlof{\vphi_L+m_{k-1}\psi}$
consisting of all the germs $F \in \mtidlof{\vphi_L+m_{k-1}\psi}_x$
which are locally $L^2$ with respect to the Ohsawa measure, as defined
in \cite{Demailly_extension}*{Def.~(2.11)}.
It follows that $\mtidlof{\vphi_L +m_k\psi} \subset \multidl'(m_{k-1})
\subset \mtidlof{\vphi_L+m_{k-1}\psi}$, and Theorem
\ref{thm:Demailly-extension-with-estimate} indeed states that, under
the compactness assumption on $X$ (so that locally $L^2$ implies
globally $L^2$), all sections in $\cohgp 0[S]{K_X\otimes L\otimes
  \frac{
    \multidl'(m_{k-1})
  }{\mtidlof{\vphi_L +m_k\psi}}}$ can be extended holomorphically to
some $F \in \cohgp 0[X]{K_X\otimes L\otimes \multidl'(m_{k-1})}$ with
$L^2$ estimates.
Given the fact that Theorem \ref{thm:Demailly-extension} also guarantees
the existence of extensions of sections taking values in the
complement $\frac{
  \mtidlof{\vphi_L +m_{k-1}\psi}
}{
  \mtidlof{\vphi_L +m_{k}\psi}
} \setminus \frac{
  \multidl'(m_{k-1})
}{
  \mtidlof{\vphi_L +m_{k}\psi}
}$, the goal of this study is to obtain reasonable estimates for these
extensions.
The residue functions and the corresponding adjoint ideal sheaves
discussed in Section \ref{sec:residue-functions} are introduced to
facilitate the quest.
The sheaf $\multidl'(m_{k-1})$ indeed equals
$\aidlof|1|{\vphi_L}.{m_k}$, the adjoint ideal sheaf of index $1$
defined via the residue functions (see Corollary
\ref{cor:1-aidl=Demailly-intermediate-multidl}).

If the desired estimate for some holomorphic extension of every
section taking values in $\frac{
  \mtidlof{\vphi_L +m_{k-1}\psi}
}{
  \mtidlof{\vphi_L +m_{k}\psi}
}$ can be obtained for every $k \geq 1$, then, in view of the short
exact sequence
\begin{equation*}
  \renewcommand{\objectstyle}{\displaystyle}
  \xymatrix@R=0.1cm{
    {0} \ar[r] &
    {\frac{
        \mtidlof{\vphi_L +m_{\ell}\psi}
      }{
        \mtidlof{\vphi_L +m_{k}\psi}
      }} \ar[r] &
    {\frac{
        \mtidlof{\vphi_L +m_{\ell-1}\psi}
      }{
        \mtidlof{\vphi_L +m_{k}\psi}
      }} \ar[r] &
    {\frac{
        \mtidlof{\vphi_L +m_{\ell-1}\psi}
      }{
        \mtidlof{\vphi_L +m_{\ell}\psi}
      }} \ar[r] &
    {0}
  }
\end{equation*}
for all integers $k \geq \ell \geq 1$, one can also obtain an
extension with estimate for any $f$ taking values in $\frac{
  \mtidlof{\vphi_L +m_{0}\psi}
}{
  \mtidlof{\vphi_L +m_{k}\psi}
}$ by treating $f$ as an $\frac{
  \mtidlof{\vphi_L +m_{0}\psi}
}{
  \mtidlof{\vphi_L +\alert{m_{1}}\psi}
}$-valued section to obtain an $\mtidlof{\vphi_L+m_0\psi}$-valued
extension $F_0$ on $X$ with estimate and repeating the process to $f -
F_0 \bmod \mtidlof{\vphi_L +m_k\psi}$, which is now $\frac{
  \mtidlof{\vphi_L +\alert{m_{1}}\psi}
}{
  \mtidlof{\vphi_L +m_{k}\psi}
}$-valued.
Iterating this procedure results in $\mtidlof{\vphi_L+m_j\psi}$-valued
sections $F_j$ for $j =0, \dots, k-1$, each having an estimate, and
the sum $\sum_{j = 0}^{k-1} F_j$ is an extension of $f$.

\begin{remark}
  At the moment of writing, the author cannot even make a prediction
  on whether it is possible to obtain a holomorphic extension, with
  estimate, for an $\frac{
    \mtidlof{\vphi_L +m_{0}\psi}
  }{
    \mtidlof{\vphi_L +\dep[1]m_{0}\psi}
  }$-valued section, where $\dep[1]m_{0} := \lim_{k \tendsto +\infty}
  m_k < +\infty$. 
\end{remark}

\section{Residue functions and adjoint ideal sheaves}
\label{sec:residue-functions}

The definition of residue functions is based on the following
model:
given the function $\psi :=\sum_{j=1}^\sigma \log x_j -1$ on the cube
$[0,1]^n$ (where $\sigma \leq n$) and any compactly supported smooth
function $G \in \smooth_c\paren{[0,1)^n}$, a direct computation with
Fubini's theorem and integration by parts yields
\begin{align*}
  \RTF[G](\eps)[s]
  &:=\eps \int_{[0,1]^n} \frac{G \:dx_1 \dotsm dx_n}{x_1 \dotsm x_\sigma
    \:\logpole/s/} \\
  &\: =
    \begin{aligned}[t]
      &-\eps c_{1}(s,\sigma,\eps) \:\RTF[G](1+\eps)[s] -\dotsm
      -\eps c_{\sigma -1}(s,\sigma,\eps) \:\RTF[G](\sigma -1 +\eps)[s]
      \\
      &~+\frac{\paren{-1}^\sigma}{(\sigma-1)!}
      \int_{[0,1]^n}
      \frac{\diff^\sigma G}{\diff {x_\sigma} \dotsm \diff {x_1}} \:
      \frac{\:dx_1\dotsm dx_n}{\paren{\log\abs{e\psi}}^\eps} 
      & \text{if } s=\sigma \; ,
      % \begin{dcases}
      %   \eps \:\paren{-1}^\sigma  \frac{(s-\sigma -1)!}{(s-1)!}
      %   \int_{[0,1]^n}
      %   \frac{\diff^\sigma G}{\diff {x_\sigma} \dotsm \diff {x_1}} \:
      %   \frac{dx_1\dotsm dx_n}{\logpole/s-\sigma/} 
      %   & \text{if } s > \sigma \; ,
      %   \\
      % \end{dcases}
    \end{aligned}
  \\
  &\hphantom{=}~\mathllap{\xrightarrow{\eps \tendsto 0^+}}~
    \frac{1}{(\sigma-1)!} \int_{[0,1]^{n-\sigma}}
    \res G_{\set{x_1 =\dotsm =x_\sigma =0}} \:dx_{\sigma+1} \dotsm dx_n \; ,
    % \begin{dcases}
    %   0 & \text{if } s > \sigma \; , \\
    %   & \text{if } s=\sigma \; ,
    % \end{dcases}
  % \\
  % \RTF[G](\eps)[s]
  % &~\text{ diverges}  \quad \text{if } s < \sigma \text{ and }
  %   \res G_{\set{x_1 = \dotsm = x_\sigma = 0}} \neq 0 \; .
\end{align*}
where $\eps > 0$ and $c_j(s,\sigma,\eps) > 0$ for $j=1,\dots,\sigma-1$
are positive coefficients which are polynomials in $\eps$ (see the
proof of \cite{Chan_on-L2-ext-with-lc-measures}*{Prop.~2.2.1} for the
computation in a more general setup).
One can also show that the integral $\RTF[G](\eps)[s]$ diverges for
any $\eps > 0$ when $s < \sigma$ and $\res G_{\set{x_1 =\dotsm
    =x_\sigma =0}} \not\equiv 0$, and $\lim_{\eps \tendsto 0^+}
\RTF[G](\eps)[s] = 0$ when $s > \sigma$.
Using the formula obtained after applying successive integration by
parts, the function $\eps \mapsto \RTF[G](\eps)[s]$ can be continued
analytically to the whole complex plane (see
\cite{Chan_on-L2-ext-with-lc-measures}*{Thm.~2.3.1}).
This illustrates that such kind of functions connect analytically the
norm on an subvariety (at $\eps =0$) with a norm on the ambient space
with a specific weight (at some $\eps > 0$).

\subsection{Definitions of residue functions and related notions}

The definition of residue functions in the setup as in Section
\ref{sec:Demailly-extension} is given as follows.

\begin{definition}[\cite{Chan_on-L2-ext-with-lc-measures}*{Def.~1.1.1}]
  Given the potential $\vphi_L$, the function $\psi \leq -1$ and the
  jumping numbers $\set{m_k}_{k \in \Nnum}$ described as in Section
  \ref{sec:Demailly-extension}, on any open set $V \subset X$, the
  \emph{residue function $\fieldR_{>0} \ni \eps \mapsto
    \RTF[G](\eps)[V,\sigma]$ of index $\sigma$ for any $L \otimes \conj
    L$-valued $(n,n)$-form $G$ with respect to the data
    $(V,\vphi_L,\psi,m_k)$} is given by
  \begin{equation*}
    \RTF[G](\eps)[V,\sigma]
    := \RTF[G](\eps)[V,\vphi_L,\psi,m_k,\sigma]
    := \eps \int_V \frac{
      G \:e^{-\vphi_L -m_k\psi}
    }{
      \logpole
    } \quad\text{ for } \eps > 0 \; .
  \end{equation*}
\end{definition}

When $\vphi_L$ has only neat analytic singularities and $G =\abs f^2$
for some $f \in K_X \otimes L \otimes \mtidlof{\vphi_L +m_{k-1}\psi}
(V)$ such that $\RTF|f|(\eps)[V,\sigma] < +\infty$ for all $\eps > 0$,
the function can then be continued analytically to an entire function
(see \cite{Chan_on-L2-ext-with-lc-measures}*{Thm.~2.3.1} with the
log-resolution of $(X,\vphi_L,\psi)$ described as in
\cite{Chan_adjoint-ideal-nas}*{\S 2.3} considered).
The value $\RTF|f|(0)[V,\sigma]$, called the \emph{residue norm of $f$ on
the $\sigma$-lc centres of $(X,\vphi_L,\psi,m_k)$}, is indeed the
$L^2$ norm with respect to the $\sigma$-lc-measure introduced in
\cite{Chan&Choi_ext-with-lcv-codim-1}*{Def.~1.4.3}.
In order to describe the supports of these $\sigma$-lc-measures more
properly, the following version of adjoint ideal sheaves is
introduced.

\begin{definition}[\cite{Chan_adjoint-ideal-nas}*{Def.~1.2.1}]
  The \emph{(analytic) adjoint ideal sheaf of index
    $\sigma$ with respect to the data $\lcdata<X>.{m_k}$}, denoted
  by $\aidlof :=\aidlof{\vphi_L}.{m_k}$, is an ideal sheaf on $X$ such
  that its stalk at each $x \in X$ is given by
  \begin{equation*}
    \aidlof{\vphi_L}.{m_k}_x
    =\setd{f\in \holo_{X,x}}{\exists~\text{open set } V_x \ni x \: , \;
      \forall~\eps > 0 \: , \;
      \RTF|f|(\eps)[V_x,\vphi_L,\psi,m_k,\sigma] < +\infty } \; . 
  \end{equation*}
  % It may also be written as $\aidlof{\vphi_L}$ when the jumping number
  % $m_k$ is understood.
\end{definition}

When $\vphi_L$ has only neat analytic singularities, it is shown in
\cite{Chan_adjoint-ideal-nas}*{Thm.~4.1.2} that there exists an
integer $\sigma_{\mlc} \in [0, n]$ such that
\begin{equation*}
  \xymatrix@C=0.2cm@R=0.4cm{
    {\aidlof|0|.{m_k}} \ar@{}[r]|-*{\subset} &
    {\aidlof|1|.{m_k}} \ar@{}[r]|-*{\subset} &
    {\dotsm} \ar@{}[r]|-*{\subset} &
    {\aidlof|\sigma_{\mlc}|.{m_k}} \ar@{}[r]|-*{=} &
    {\aidlof|\sigma_{\mlc}+1|.{m_k}} \ar@{}[r]|-*{=} &
    {\dotsm} \\
    {\mtidlof{\vphi_L+m_k\psi}} \ar@{}[u]|(.4)*[left]{=}
    &&&
    {\mtidlof{\vphi_L+m_{k-1}\psi}} \ar@{}[u]|(.4)*[left]{=}
  }
\end{equation*}
The filtration gives rise to the lc centres. 

\begin{definition}[\cite{Chan_adjoint-ideal-nas}*{Def.~1.2.4}] \label{def:sigma-lc-centres}
  A \emph{$\sigma$-lc centre of $(X,\vphi_L,\psi,m_k)$} is an irreducible
  component $\lcS$ of the reduced subvariety $\lcc :=\lcc.{m_k} =\bigcup_{p
    \in \Iset} \lcS$ in $X$ defined by
  the annihilator
  \begin{equation*}
    \defidlof{\lcc.{m_k}}
    :=\Ann_{\holo_X}\paren{\frac{
      \aidlof{\vphi_L}.{m_k}
    }{
      \aidlof-1{\vphi_L}.{m_k}
    }}
  \end{equation*}
  (see \cite{Chan_adjoint-ideal-nas}*{Thm.~5.2.1} for a proof on
  $\lcc.{m_k}$ being reduced).
\end{definition}
Note that one has $S = \lcc|1| \cup \lcc|2| \cup \dotsm \cup
\lcc|\sigma_{\mlc}|$ by \cite{Chan_adjoint-ideal-nas}*{Prop.~5.2.6}. 
When $\vphi_L$ has only neat analytic singularities, % one sees that 
% $\lcc|1| =S$ by considering a log-resolution of $(X,\vphi_L,\psi)$
% (see \cite{Chan_adjoint-ideal-nas}*{Def.-Thm.~1.2.4}). 
% Indeed,
after passing to a log-resolution of $(X,\vphi_L,\psi)$ as discussed
in \cite{Chan_adjoint-ideal-nas}*{\S 2.3} such that $S$ is a reduced
snc divisor in particular, each $\sigma$-lc centre $\lcS$ defined in
Definition \ref{def:sigma-lc-centres} is just a component of the
intersection of any $\sigma$ distinct pieces of irreducible components
of $S$, which coincides with the lc centre of codimension $\sigma$ of
the log-smooth and lc pair $(X,S)$ in the study of minimal model
program (see \cite{Kollar_Sing-of-MMP}*{Def.~4.15}; see also
\cite{Chan_adjoint-ideal-nas}*{Example 6.2.1} for an example on which
the two concepts of lc centres differ).
Moreover, in this case, one has $\aidlof.{m_k}
=\mtidlof{\vphi_L+m_{k-1}\psi} \cdot \defidlof{\lcc+1.{m_k}}$ (see
\cite{Chan_adjoint-ideal-nas}*{Thm.~4.1.2}).

A direct computation (see
\cite{Chan&Choi_ext-with-lcv-codim-1}*{Prop.~2.2.1}) shows that, for
any $f \in \aidlof(\cl V)$ on the closure of any open set $V$ in $X$,
the value $\RTF|f|(0)[V,\sigma]$ is finite and is an $L^2$ norm of $f$
on $\lcc$.
In view of this, for any $f \in \cohgp 0[S]{K_X \otimes L \otimes \frac{
    \mtidlof{\vphi_L+m_{k-1}\psi}
  }{
    \mtidlof{\vphi_L+m_{k}\psi}
  }}$, the
\emph{$\sigma$-lc-measure with respect to $(\vphi_L,\psi,m_k, f)$} is
defined to be the measure $\abs f^2 \lcV^{m_k}$ given by the functional
\begin{equation*}
  \xymatrix@R=0.1cm{
    {\smooth[0]_c\paren{\lcc.{m_k} \setminus \lcc+1.{m_k}}} \ar[r]
    \ar@{}[d]|*[left]{\in} &
    {\fieldR} \ar@{}[d]|*[left]{\in}
    \\
    *+/r 1.8cm/{g} \ar@{|->}[r]+/l 3.8cm/ &
    {\displaystyle \RTF[g]|f|(0)[X,\sigma]
    =: \int_{\mathrlap{\lcc.{m_k}}} \qquad g \:\abs f^2 \lcV^{m_k} \; .}
  }
\end{equation*}
It follows from \cite{Chan_adjoint-ideal-nas}*{Thm.~4.1.2} that, if
the function $f$ takes values in $\frac{\aidlof|\alert{s}|}{\aidlof|\alert{s-1}|}$,
the measure $\abs f^2 \lcV|\alert{\sigma}|^{m_k}$ is non-trivial and is nowhere
divergent if and only if $s = \sigma$. 

Another use of such adjoint ideal sheaves can be found in
\cite{Chan&Choi_injectivity-I}.

\subsection{Relation with the Ohsawa measure and extension theorem of
  Demailly}

The following proposition shows that the $1$-lc-measure is nothing
other than the Ohsawa measure.
\begin{prop} \label{prop:1-lcV=Ohvol}
  Assume that $\vphi_L$ has only neat analytic singularities.
  For any $f \in \cohgp 0[S]{K_X \otimes L \otimes
    \frac{
      \mtidlof{\vphi_L +m_{k-1}\psi}
    }{
      \mtidlof{\vphi_L +m_{k}\psi}
    }}$ and $g \in \smooth[0]_c\paren{S \setminus \Sing(S)}$,
  \begin{equation*}
    \RTF[g]|f|(0)[X,1] =\int_{S} g \:\abs{f}^2 \lcV|1|^{m_k}
    =\int_S g \:\abs{J^{m_k} f}^2 \Ohvol \; .
  \end{equation*}
  % As $\smooth_c\paren{S \setminus \Sing(S)}$ is dense in
  % $\smooth[0]_c\paren{S \setminus \Sing(S)}$, this also implies that
  In other words, $\abs{f}^2 \lcV|1|^{m_k} =\abs{J^{m_k} f}^2 \Ohvol$
  in the sense of measures (which can possibly diverge around
  $\Sing(S)$).
\end{prop}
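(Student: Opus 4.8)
The first equality is merely the definition of the $1$-lc-measure $\abs f^2\lcV|1|^{m_k}$ as the functional $g\mapsto\RTF[g]|f|(0)[X,1]$; the substance is the second equality, an identity of measures on $S\setminus\Sing(S)$. The plan is to reduce it to the one-variable model recalled at the start of Section~\ref{sec:residue-functions}. Passing to a log-resolution $\mu\colon\rs X\to X$ of $(X,\vphi_L,\psi)$ as in \cite{Chan_adjoint-ideal-nas}*{\S 2.3} — after which $S$ is a reduced snc divisor and $\vphi_L,\psi$ have only monomial-plus-smooth singularities — the residue side is computed on $\rs X$ by \cite{Chan_on-L2-ext-with-lc-measures}*{Thm.~2.3.1}, and so is the Ohsawa side, because $\mu^*$ preserves integrals of top-degree forms and $\mu^{-1}\set{t<\psi<t+1}=\set{t<\mu^*\psi<t+1}$. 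Thus both measures on $S$ are the pushforwards under $\mu$ of the corresponding measures upstairs, and it suffices to compare the two upstairs near a generic point of a component of $S$.

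Fix such a point $x_0$ (upstairs), choose coordinates $(z_1,\dots,z_n)$ at $x_0$ with $S=\set{z_1=0}$, and trivialise $K_X\otimes L$. Since $x_0$ is generic in its component and $S$ is snc, near $x_0$ only the single coordinate $z_1$ is involved, and one has $\vphi_L+m_k\psi=d\log\abs{z_1}^2+u_k$ with $u_k$ smooth, where $d\ge1$ is the integer with $\mtidlof{\vphi_L+m_k\psi}_{x_0}=(z_1^d)$, while $\mtidlof{\vphi_L+m_{k-1}\psi}_{x_0}=(z_1^{d-1})$ (by \cite{Chan_adjoint-ideal-nas}*{Thm.~4.1.2}, using $\aidlof|1|.{m_k}=\mtidlof{\vphi_L+m_{k-1}\psi}\cdot\defidlof{\lcc|2|.{m_k}}$ and that $x_0$ avoids the codimension-$\ge2$ locus $\lcc|2|.{m_k}$; the jump from $(z_1^{d-1})$ to $(z_1^{d})$ is by exactly one power because $S$ is reduced at $x_0$), and $\psi=b\log\abs{z_1}^2+v$ with $v$ smooth and $b>0$. (The jump at $m_k$ at $x_0$ forces $b>0$ and forces the coefficient of $\log\abs{z_1}^2$ in $\vphi_L+m_k\psi$ to be the \emph{integer} $d$ rather than a larger non-integer; this is why $e^{-u_k}$ is locally bounded and the measure is finite at $x_0$.) Writing $f_0\in\holo_{S,x_0}$ for the image of $f$ under $\mtidlof{\vphi_L+m_{k-1}\psi}/\mtidlof{\vphi_L+m_k\psi}\xrightarrow{\sim}\holo_S$, $z_1^{d-1}h\mapsto h|_{z_1=0}$, one gets, with $d\lambda$ the standard volume form,
\[
  \abs f^2\,e^{-\vphi_L-m_k\psi}
  =\frac{\abs{f_0}^2\,e^{-u_k}}{\abs{z_1}^2}\,\paren{1+o(1)}\,d\lambda
  \quad\text{as }z_1\tendsto 0 ,
\]
and likewise $\abs{z_1}^2\abs{\rs f}^2\,e^{-\vphi_L-m_k\psi}\tendsto\abs{f_0}^2\,e^{-u_k|_S}$ for any smooth extension $\rs f$ of $f$; correspondingly $J^{m_k}f$ is represented by $f_0$.

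It remains to evaluate the two sides in this model. On the Ohsawa side, for fixed $z'=(z_2,\dots,z_n)$ the slice of $\set{t<\psi<t+1}$ is an annulus in the $z_1$-plane whose radii $\rho_1(t,z')<\rho_2(t,z')$ satisfy $\log(\rho_2/\rho_1)\tendsto\tfrac1{2b}$ as $t\tendsto-\infty$; integrating $\abs{z_1}^{-2}\,d\lambda_{z_1}$ over such an annulus gives $2\pi\log(\rho_2/\rho_1)\tendsto\tfrac\pi b$, so dominated convergence yields $\int_S g\,\abs{J^{m_k}f}^2\Ohvol=\tfrac\pi b\int_S g\,\abs{f_0}^2\,e^{-u_k|_S}\,d\lambda_{z'}$. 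On the residue side, $\RTF[g]|f|(\eps)[X,1]=\eps\int_X\frac{g\,\abs f^2\,e^{-\vphi_L-m_k\psi}}{\abs\psi\,\paren{\log\abs{e\psi}}^{1+\eps}}$; inserting the model expressions, integrating out the angular variable, and substituting $s=-\log\abs{z_1}^2$ and then $w=1+\log(bs)$ (so that $dw=ds/s$) turns the $z_1$-integral into $\tfrac\pi b\,\eps\int_{w_0}^{\infty}w^{-1-\eps}\,dw\cdot\paren{1+o(1)}=\tfrac\pi b\,w_0^{-\eps}\paren{1+o(1)}\tendsto\tfrac\pi b$ as $\eps\tendsto 0^+$, whence $\RTF[g]|f|(0)[X,1]=\tfrac\pi b\int_S g\,\abs{f_0}^2\,e^{-u_k|_S}\,d\lambda_{z'}$; all normalisation conventions — the $\tfrac1{(\sigma-1)!}=1$ of the residue model for $\sigma=1$, the prefactor $\eps$, and the common factor relating $\abs{\cdot}^2$ of $K_X\otimes L$-valued forms to Lebesgue measure — are accounted for identically on the two sides, so the two displays coincide. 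This is the second equality, and the proposition follows.

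I expect the crux to lie on the Ohsawa side rather than the residue side: one must justify that the $t\tendsto-\infty$ limit of the integrals over $\set{t<\psi<t+1}\subset X$ — which a priori concerns the original, possibly singular geometry — is computed correctly after passing to $\rs X$, and that the bounded non-monomial parts of $\vphi_L$ and $\psi$, together with the difference between $\rs f$ and its jet $f_0$, contribute nothing in the limit; this requires uniform-in-$t$ control of the annular slices, for which the smoothness (hence boundedness and continuity along $\set{z_1=0}$) of $u_k$ and $v$ is precisely what is used. Put differently, the real content of the proposition is the reconciliation of two a priori unrelated limiting procedures — the analytic continuation $\eps\tendsto 0^+$ of a weighted ambient integral on the residue side versus the limit $t\tendsto-\infty$ over level sets of $\psi$ on the Ohsawa side — and the point is that, after resolution, both collapse to the same elementary computation of the logarithmic width $\tfrac1{2b}$ of a thin annulus.
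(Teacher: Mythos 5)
Your proof is correct and follows the same overall strategy as the paper's: pass to a log-resolution, localise near a single component $D_1$ of $S$, and show that both limiting procedures produce the same density $\tfrac{\pi}{\nu_1}\abs{f_0}^2 e^{-\bphi}$ on $D_1$ (your $b$ and $u_k$ are the paper's $\nu_1$ and $\bphi+\phi_{(1)}$, and your observation that $c_L+m_k\nu_1$ is forced to be the integer $d$, so the jump is by exactly one power of $z_1$, is exactly what the decomposition $\vphi_L+m_k\psi=\bphi+\phi_{S_0}+\phi_S$ encodes). The two limit evaluations, however, are carried out by genuinely different means. On the Ohsawa side the paper first invokes the independence of the limit from the choice of smooth extension $\rs g$ of $g$ and then picks a specially designed $\rs g$, proportional to $r_1^2\,\partial\psi/\partial r_1^2$ divided by $\abs{\rs f_0'}^2e^{-\bphi}$, so that after passing to the variables $(\psi,\theta_1)$ the integration by parts leaves only boundary terms; you keep the natural extension $\pr^*g$ and estimate the integral over the slice $\set{t<\psi<t+1}$ directly via its logarithmic width $1/(2b)$ plus dominated convergence, which is more elementary and sidesteps the auxiliary resolution the paper needs in order to make sense of dividing by $\abs{\rs f_0'}^2e^{-\bphi}$. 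Likewise, on the residue side you replace the paper's integration by parts in $\psi$ by the explicit substitution $w=\log\abs{e\psi}$ and the elementary identity $\eps\int_{w_0}^{\infty}w^{-1-\eps}dw=w_0^{-\eps}$. The one step you should tighten is the reduction to \emph{generic} points of $D_1$: the support of $g$ is only required to avoid $\Sing(S)$, so it may meet the residual polar divisors of $\vphi_L$ inside $D_1$, where your hypothesis that $u_k$ is smooth fails and where both measures may in fact diverge. The paper's computation is set up on a whole admissible chart with the possibly singular factor $e^{-\bphi}$ kept explicit, so both limits are identified with the (possibly divergent) integral $\tfrac{\pi}{\nu_1}\int_{V\cap D_1}g\abs{\rs f_0'}^2e^{-\bphi-\phi_{(1)}}$ there as well; your argument as written covers only the complement of a proper analytic subset of $D_1$ and needs either this more general local computation or a separate argument that neither side charges that subset.
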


\begin{proof}
  \setDefaultlcIndex{1}
  
  Passing to a log-resolution of $(X,\vphi_L,\psi)$ (see
  \cite{Chan_adjoint-ideal-nas}*{\S 2.3}), one can assume
  % that the polar ideal sheaves $\shfP_{\vphi_L}$ and $\shfP_{\psi}$
  % and their product $\shfP_{\vphi_L} \cdot \shfP_{\psi}$ 
  % are all principal with local \emph{snc generators} at every point of
  % $X$ (i.e.~there is a local generator given by a monomial in the local
  % holomorphic coordinates at every point of $X$).
  that $\psi^{-1}(-\infty)$ (hence $S$), $\vphi_L^{-1}(-\infty)$ and
  $\psi^{-1}(-\infty) \cup \vphi_L^{-1}(-\infty)$ are all snc divisors
  (the triple $\lcdata<X>$ is said to be in an \emph{snc configuration}).
  Since $g$ has compact support away from $\Sing(S)$, by decomposing
  $g$ into a sum of functions supported on different components of
  $S$, one can assume that $g$ is supported only on a single
  component, say, $D_1$ $(:= \lcS|1|[1])$, of $S$ (while remaining in
  $\smooth[0]_c\paren{S\setminus \Sing(S)}$).
  Via the use of a partition of unity, one can assume further that $g$
  is supported on $V \cap D_1$, where $V$ is an admissible open set in
  $X$ such that $V \cap S =\set{z_1 \dotsm z_{\sigma_V} = 0}$ and $V
  \cap D_1 =\set{z_1 = 0}$ (see \cite{Chan_adjoint-ideal-nas}*{\S 4.1}
  for more precise definition of admissible open sets in this
  context).
  Moreover, $\res\psi_V$ can be expressed as
  \begin{equation*}
    \res\psi_V = \sum_{j=1}^{\sigma_V} \nu_j \log\abs{z_j}^2
    +\smashoperator[r]{\sum_{k=\sigma_V+1}^n} a_k \log\abs{z_k}^2 +\alpha \; ,
  \end{equation*}
  where $\nu_j > 0$ for $j=1,\dots, \sigma_V$ and $a_k \geq 0$ for
  $k=\sigma_V+1, \dots, n$ are constants and $\alpha$ is a
  smooth function on $V$.
  The set $V$ can be decomposed into $U \times W$ where $U$ is a $1$-disc
  about the origin in the $z_1$-plane and $W \isom V \cap D_1$.

  Following the discussion in \cite{Chan_adjoint-ideal-nas}*{\S 2.3}
  (in particular, \cite{Chan_adjoint-ideal-nas}*{Remark 2.3.8 and
    Lemma 2.3.9}), there exist an effective snc ($\Znum$-)divisor
  $S_0$ with $\supp S_0 \subset S$ and a quasi-psh potential $\bphi$
  on $L \otimes S_0^{-1} \otimes S^{-1}$ such that
  \begin{equation} \label{eq:potential-decomposition}
    \vphi_L +m_k\psi = \bphi +\phi_{S_0} +\phi_{S} \; ,
  \end{equation}
  where $\phi_{S_0}$ and $\phi_S$ are potentials defined from some
  holomorphic canonical sections of $S_0$ and $S$ respectively,
  and $e^{-\bphi}$ is locally integrable at general points of $S$ in
  $X$.
  Indeed, since $\bphi$ has only neat analytic singularities with snc,
  this means that $\bphi^{-1}(-\infty) \not\supset D_j$ for any
  irreducible component $D_j$ of $S$.
  Moreover, given the canonical section $\sect_0$ of $S_0$ such that
  $\phi_{S_0} =\log \abs{\sect_0}^2$ and any local lifting $\rs f \in
  K_X \otimes L \otimes \mtidlof{\vphi_L +m_{k-1}\psi}\paren{V}$
  of $f$, one has $\rs f_0 :=\frac{\rs f}{\sect_0}$ being holomorphic
  (see \cite{Chan_adjoint-ideal-nas}*{Remark 2.3.8}).
  Also write $\rs f_0 =: \rs f_0' \:dz_1 \wedge \dotsm \wedge dz_n$
  and $\abs{\rs* f_0}^2 := \abs{\rs* f_0'}^2 \bigwedge_{j=1}^n
  \pi\ibar \:dz_j \wedge d\conj{z_{j}} =\abs{\rs* f_0'}^2 \dvol_V$
  (where $\ibar := \ibardefn \;$ \ibarfootnote).

  Write $\lcS[(1)] := S -D_1$ for convenience.
  Consider the projection $\pr \colon V \isom U \times W \to W \isom
  V\cap D_1$ given by $(z_1, z_2, \dots, z_n) \mapsto (z_2, \dots,
  z_n)$.
  Choose a compactly supported smooth cut-off function $\rho \colon U
  \to [0,1]$ such that $\rho \equiv 1$ on a neighbourhood of the
  origin.
  Let $\rs g := \rho(z_1) \cdot \pr^*g \in \smooth[0]_c\paren{V
    \setminus \lcS[(1)]}$, which is an extension of $g$ (note that
  $\pr^*g$ is independent of the variable $z_1$),
  and let $\sect$ be the canonical section of $S$ such that $\phi_S
  =\log\abs{\sect}^2$ and $\sect = z_1 \dotsm z_{\sigma_V}$ on $V$.
  Write $\sect =: z_1 \sect_{(1)}$ and $\phi_{(1)}
  :=\log\abs{\sect_{(1)}}^2$ for convenience.
  Let $(r_1, \theta_1)$ be the polar coordinates of the $z_1$-plane.
  Recall that, by the definition of admissible open sets (see
  \cite{Chan_adjoint-ideal-nas}*{\S 4.1}), one has
  $r_1^2 \fdiff{r_1^2}\psi > 0$ on $V$ and $\parres{r_1^2
    \fdiff{r_1^2}\psi}_{\set{r_1 = 0}} = \nu_1$.
  %  = \lelong{\psi}[D_1]
  % the generic Lelong number of $\psi$ along $D_1$.
  In view of Fubini's theorem, the norm with respect to the
  $1$-lc-measure can be computed as
  \begin{align*}
    \RTF[g]|f|(\eps)[X,1]
    &=\eps \int_X \frac{\rs g \:\abs{\rs*f}^2 \:e^{-\vphi_L-m_k\psi}}{\logpole//}
    =\eps \int_V \frac{\rs g \:\abs{\rs* f_0'}^2 \:e^{-\bphi
        -\phi_{(1)}}}{\abs{z_1}^2 \:\logpole//} \bigwedge_{j=1}^n
      \pi\ibar \:dz_j \wedge d\conj{z_{j}} \\
    &=\eps \int_W \int_U \frac{\rs g \:\abs{\rs* f_0'}^2 \:e^{-\bphi
      -\phi_{(1)}}}{r_1^2 \fdiff{r_1^2}\psi \:\logpole//}
      \:d \psi \: \frac{d\theta_1}{2} \cdot
      \bigwedge_{j=2}^n \pi\ibar \:dz_j \wedge d\conj{z_{j}} \\
    &=\int_W \int_U \frac{\rs g \:\abs{\rs* f_0'}^2 \:e^{-\bphi
      -\phi_{(1)}}}{r_1^2 \fdiff{r_1^2}\psi}
      \:d\paren{\frac{1}{\paren{\slog}^\eps}}  \frac{d\theta_1}{2} \cdot
      % \bigwedge_{j=2}^n \pi\ibar \:dz_j \wedge d\conj{z_{j}}
      \dvol_{W} \\
    &\overset{\mathclap{\text{int.~by parts}}}= \quad
      -\int_W \int_U \fdiff{r_1} \paren{\frac{\rs g \:\abs{\rs* f_0'}^2 \:e^{-\bphi
      -\phi_{(1)}}}{r_1^2 \fdiff{r_1^2}\psi}} \:
      \frac{1}{\paren{\slog}^\eps}
      \:dr_1 \:\frac{d\theta_1}{2} \cdot
      % \bigwedge_{j=2}^n \pi\ibar \:dz_j \wedge d\conj{z_{j}}
      \dvol_{W} \\
    &\hphantom{=}~\mathllap{\xrightarrow{\eps \tendsto 0^+}}~
      \frac{\pi}{\nu_1} \int_{V \cap D_1}
      g \:\abs{\rs* f_0'}^2 \:e^{-\bphi -\phi_{(1)}}
      \dvol_{V \cap D_1} \; .
  \end{align*}
  (Note that $e^{-\phi_{(1)}}$ is smooth on $\supp g$.)
  For the norm with respect to the Ohsawa measure, note that the norm
  is independent of the choice of the extension $\rs g$ of $g$ (see,
  for example, \cite{KimDano&Seo_adj-idl}*{Lemma 3.5} for a detailed
  proof).
  % By treating $e^{\psi}$ and $\theta_1$ as real coordinates on
  % relatively compact subsets of $V \setminus \lcS[(1)]$,
  % consider the \emph{smooth} projection map $\pr \colon V \setminus
  % \lcS[(1)] \to \paren{V \cap D_1} \setminus \lcS[(1)]$ given via the
  % maps
  % \begin{equation*}
  %   \xymatrix@R=0.1cm{
  %     {V \setminus \lcS[(1)]} 
  %     &
  %     {[0,M) \times [0,2\pi] \times W} \ar[l] \ar[r]
  %     &
  %     {W \mathrlap{\isom \paren{V \cap D_1} \setminus \lcS[(1)]}}
  %     \\
  %     {(z_1, z_2, \dots, z_n)} 
  %     &
  %     {(e^\psi , \theta_1, z_2, \dots, z_n)} \ar@{|->}[l] \ar@{|->}[r]
  %     &
  %     {(z_2, \dots, z_n)}
  %   } \qquad
  % \end{equation*}
  % (note that $\set{e^{\psi}=0} = \set{\psi =-\infty}$ on $V \setminus
  % \lcS[(1)]$ corresponds precisely to $D_1$ on the open set).
  Choose $\rs g$ to be
  \begin{equation*}
    \rs g
    := \rho(z_1) \cdot \pr^*\paren{g \:\parres{\abs{\rs*f_0'}^2 e^{-\bphi}}_{D_1}}
    \frac{1}{\nu_1} \:
    \frac{
      r_1^2 \fdiff{r_1^2} \psi
    }{\abs{\rs*f_0'}^2 e^{-\bphi}} 
  \end{equation*}
  (one may take a further log-resolution such that $\divsr{\rs*f_0'} +
  S$ is in snc and the poles coming from $\frac{1}{\abs{\rs*f_0'}^2}$
  in the above formula can be cancelled out by
  $\pr^*\paren{\res{\abs{\rs*f_0'}^2}_{D_1}}$).
  Let $t$ be sufficiently negative such that $\rho \equiv 1$ on
  $\set{\psi < t+1} \cap \supp \rs g$ (recall the $g \in
  \smooth[0]_c\paren{D_1 \setminus \lcS[(1)]}$).
  It then follows from Fubini's theorem and integration by parts that
  \begin{align*}
    \smashoperator[r]{\int\limits_{\set{t < \psi < t+1}}} \;
    \rs g \:\abs{\rs* f}^2
    \:e^{-\vphi_L -m_k\psi} 
    &=\smashoperator[r]{\int\limits_{\set{t < \psi < t+1}}} \;
      \rs g \:\abs{\rs* f_0'}^2
      \:e^{-\bphi-\phi_{(1)}}
      \:\frac{\pi\ibar \:dz_1 \wedge d\conj{z_1}}{\abs{z_1}^2} \wedge
      \bigwedge_{j=2}^n \pi\ibar \:dz_j \wedge d\conj{z_{j}} \\
    &=\smashoperator[r]{\iint\limits_{\set{t < \psi < t+1}}} \;
      \frac{\rs g \:\abs{\rs* f_0'}^2 \:e^{-\bphi}}{r_1^2
      \fdiff{r_1^2} \psi}
      \:d\psi \: \frac{d\theta_1}{2} \cdot
      e^{ -\phi_{(1)}}
      % \bigwedge_{j=2}^n \pi\ibar \:dz_j \wedge d\conj{z_{j}}
      \dvol_{W} \\
    &\overset{\mathclap{\text{int.~by parts}}}=
      \begin{aligned}[t]
        &~\quad\; \frac{1}{\nu_1}
        % \;\;
        % \smashoperator{\int\limits_{\set{\psi = t+1}}} \;
        \int_W
        \res{\pr^*\paren{g \parres{\abs{\rs* f_0'}^2 \:e^{-\bphi}}_{D_1}}}_{\mathrlap{\set{\psi =t+1}}} \quad\;
        \paren{t+1} \:
        \frac{d\theta_1}{2} \cdot
        e^{ -\phi_{(1)}}
        % \bigwedge_{j=2}^n \pi\ibar \:dz_j \wedge d\conj{z_{j}}
        \dvol_{W} \\
        &-\frac{1}{\nu_1}
        % \smashoperator[r]{\int\limits_{\set{\psi = t}}} \;
        \int_W
        \res{\pr^*\paren{g \parres{\abs{\rs* f_0'}^2 \:e^{-\bphi}}_{D_1}}}_{\mathrlap{\set{\psi = t}}} \quad\; t \:
        \frac{d\theta_1}{2} \cdot
        e^{ -\phi_{(1)}}
        % \bigwedge_{j=2}^n \pi\ibar \:dz_j \wedge d\conj{z_{j}}
        \dvol_{W} \\
        &-\frac{1}{\nu_1} \smashoperator{\iint\limits_{\set{t < \psi < t+1}}} \;
        \psi \:
        \cancelto{0}{
          \fdiff{r_1} \paren{\pr^*\paren{g \parres{\abs{\rs* f_0'}^2
                \:e^{-\bphi}}_{D_1}}}
        } \:dr_1 \:
        \frac{d\theta_1}{2} \cdot
        e^{ -\phi_{(1)}}
        % \bigwedge_{j=2}^n \pi\ibar \:dz_j \wedge d\conj{z_{j}}
        \dvol_{W}
      \end{aligned}
    \\
    &=\frac{1}{\nu_1} \int_W \res{\pr^*\paren{g \parres{\abs{\rs*
      f_0'}^2 \:e^{-\bphi}}_{D_1}}}_{\set{\psi =t+1}}
      \frac{d\theta_1}{2} \cdot e^{ -\phi_{(1)}}
      \dvol_{W} 
    \\
    &\hphantom{=}~\mathllap{\xrightarrow{t \tendsto -\infty}}~
      \frac{\pi}{\nu_1} \int_{V \cap D_1}
      g \:\abs{\rs* f_0'}^2 \:e^{-\bphi -\phi_{(1)}}
      \dvol_{V \cap D_1} =\RTF[g]|f|(0)[X,1] \; .
  \end{align*}
  % Note that the integral $\smashoperator{\iint\limits_{\set{t < \psi <
  %       t+1}}} \psi \fdiff{r_1^2} \paren{\dotsm} $ converges to $0$ as $t \tendsto -\infty$ as
  % the integrand is integrable on $\supp \rs g$.
  This completes the proof.
\end{proof}

\begin{cor} \label{cor:1-aidl=Demailly-intermediate-multidl}
  One has $\multidl'(m_{k-1}) = \aidlof|1|{\vphi_L}.{m_k}$ when
  $\vphi_L$ has only neat analytic singularities, where
  $\multidl'(m_{k-1})$ is the subsheaf of
  $\mtidlof{\vphi_L+m_{k-1}\psi}$ consisting of all the germs which
  are locally $L^2$ with respect to the Ohsawa measure, as defined in
  \cite{Demailly_extension}*{Def.~(2.11)}.
\end{cor}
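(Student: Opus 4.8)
The plan is to read the identity off Proposition \ref{prop:1-lcV=Ohvol}, which identifies the two measures in play, together with the description of $\aidlof|1|{\vphi_L}.{m_k}$ coming from \cite{Chan_adjoint-ideal-nas}. First I would note that both sheaves are subsheaves of $\mtidlof{\vphi_L+m_{k-1}\psi}$: for $\multidl'(m_{k-1})$ this is built into the definition \cite{Demailly_extension}*{Def.~(2.11)}, and for $\aidlof|1|{\vphi_L}.{m_k}$ it is the inclusion $\aidlof|1|{\vphi_L}.{m_k}\subset\aidlof|\sigma_{\mlc}|{\vphi_L}.{m_k}=\mtidlof{\vphi_L+m_{k-1}\psi}$ from the filtration of \cite{Chan_adjoint-ideal-nas}*{Thm.~4.1.2} recalled in Section \ref{sec:residue-functions}. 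Hence it suffices to show, for each $x\in X$, that a germ $f\in\mtidlof{\vphi_L+m_{k-1}\psi}_x$ belongs to $\aidlof|1|{\vphi_L}.{m_k}_x$ if and only if it is locally $L^2$ with respect to the Ohsawa measure near $x$.

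For the inclusion ``$\subset$'' I would argue as follows: if $f\in\aidlof|1|{\vphi_L}.{m_k}_x$ then, after shrinking, $\RTF|f|(\eps)[V_x,1]<+\infty$ for all $\eps>0$ on a neighbourhood $V_x$ of $x$; since $\vphi_L$ has neat analytic singularities, \cite{Chan_on-L2-ext-with-lc-measures}*{Thm.~2.3.1} (applied on a log-resolution of $(X,\vphi_L,\psi)$) continues $\eps\mapsto\RTF|f|(\eps)[V_x,1]$ to an entire function, and the value $\RTF|f|(0)[V_x,1]$ at $\eps=0$ is the (finite) residue norm $\int_{V_x\cap S}\abs f^2\lcV|1|^{m_k}$. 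Proposition \ref{prop:1-lcV=Ohvol} then identifies this with $\int_{V_x\cap S}\abs{J^{m_k}f}^2\Ohvol$, so the latter is finite and $f\in\multidl'(m_{k-1})_x$. For ``$\supset$'' I would use that local $L^2$-finiteness with respect to the Ohsawa measure makes the shell integrals $\int_{\set{-1-j\leq\psi<-j}\cap V_x}\abs{\rs f}^2 e^{-\vphi_L-m_k\psi}$ bounded uniformly in $j$; a layer-cake decomposition of the defining integral of $\RTF|f|(\eps)[V_x,1]$ along the level sets of $\psi$ --- bounding $\abs\psi$ below by $j$ and $\log\abs{e\psi}$ below by $\log(ej)$ on the $j$-th shell, then summing the series $\sum_{j\geq1}j^{-1}(\log(ej))^{-1-\eps}$, which converges for $\eps>0$ --- then shows $\RTF|f|(\eps)[V_x,1]<+\infty$ for every $\eps>0$, so $f\in\aidlof|1|{\vphi_L}.{m_k}_x$.

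Essentially all of the work is carried by the two ingredients already in hand, Proposition \ref{prop:1-lcV=Ohvol} and the analytic-continuation statement \cite{Chan_on-L2-ext-with-lc-measures}*{Thm.~2.3.1}; the corollary is then a matter of bookkeeping. The one point that needs a little care is reconciling the ``for all $\eps>0$'' definition of $\aidlof|1|{\vphi_L}.{m_k}$ with the finiteness of the residue norm at $\eps=0$: the ``$\subset$'' direction does this through the entire continuation, while the ``$\supset$'' direction does it through the elementary level-set estimate, the Ohsawa-measure hypothesis supplying precisely the uniform shell bound that this estimate requires.
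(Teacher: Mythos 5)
Your argument is correct and rests on the same two pillars as the paper's: Proposition \ref{prop:1-lcV=Ohvol} and the characterisation of $\aidlof|1|{\vphi_L}.{m_k}$ inside $\mtidlof{\vphi_L+m_{k-1}\psi}$ by finiteness of the residue norm at $\eps=0$. The difference is one of packaging. The paper obtains that characterisation in one stroke by quoting \cite{Chan_adjoint-ideal-nas}*{Thm.~4.1.2}, which already states that, for neat analytic singularities, the stalk of $\aidlof{\vphi_L}.{m_k}$ at $x$ consists exactly of the germs $f\in\mtidlof{\vphi_L+m_{k-1}\psi}_x$ with $\RTF|f|(0)[V,\sigma]<+\infty$ on some neighbourhood $V\ni x$; Proposition \ref{prop:1-lcV=Ohvol} then finishes the proof immediately. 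You instead prove the two inclusions by hand: the inclusion $\aidlof|1|{\vphi_L}.{m_k}\subset\multidl'(m_{k-1})$ via the analytic continuation of \cite{Chan_on-L2-ext-with-lc-measures}*{Thm.~2.3.1} (legitimate, since the filtration places $\aidlof|1|{\vphi_L}.{m_k}$ inside $\mtidlof{\vphi_L+m_{k-1}\psi}$, so the continuation theorem applies and its value at $\eps=0$ is the $1$-lc norm), and the reverse inclusion via the decomposition of $V$ into the shells $\set{-1-j\leq\psi<-j}$ together with the convergence of $\sum_{j\geq1}j^{-1}\paren{\log(ej)}^{-1-\eps}$. That shell estimate is a clean, self-contained substitute for the corresponding half of the cited theorem; the one step you should make explicit is why each individual shell integral is finite (so that finiteness of the limit defining the Ohsawa norm really yields a uniform bound over all shells): on $\set{t<\psi<t+1}$ one has $e^{-m_k\psi}\leq e^{-(m_k-m_{k-1})t}\,e^{-m_{k-1}\psi}$, and $f$ lies in $\mtidlof{\vphi_L+m_{k-1}\psi}$. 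Finally, note that both your proof and the paper's share the same small elision: Proposition \ref{prop:1-lcV=Ohvol} identifies the two measures only against test functions supported away from $\Sing(S)$, and passing to norms over a full neighbourhood uses implicitly that neither measure charges $\Sing(S)$.
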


\begin{proof}
  The results in \cite{Chan_adjoint-ideal-nas}*{Thm.~4.1.2} imply
  that, when $\vphi_L$ has only neat analytic singularities,
  \begin{equation*}
    \aidlof{\vphi_L}.{m_k}_x
    = \setd{f \in \mtidlof{\vphi_L +m_{k-1}\psi}_x}{
      \exists~\text{open } V \ni x \text{ such that }
      \RTF|f|(0)[V,\sigma] < +\infty
    }
  \end{equation*}
  for any integer $\sigma \geq 0$.
  The claim then follows directly from Proposition \ref{prop:1-lcV=Ohvol}.
\end{proof}

Set $\RTF|f|(\eps;\alert{\ell})[X,\sigma] := \eps \int_X
\frac{\abs f^2 \:e^{-\vphi_L-m_k\psi}}{\logpole.{\alert{\ell}}}$.
The above results indeed translate Theorem
\ref{thm:Demailly-extension-with-estimate} together with
\cite{Chan&Choi_ext-with-lcv-codim-1}*{Thm.~1.4.5} (see Remark
\ref{rem:extension-thm-with-1-lc-measure}) into a statement which says
that, under the given curvature assumption and the assumption that
$\vphi_L$ has only neat analytic singularities, there exists a
``universal constant'' $\ell \geq e$ such that every $f \in \cohgp
0[S]{K_X \otimes 
  L \otimes \frac{
    \aidlof|1|
  }{
    \aidlof|0|
  }}$ can be extended to a holomorphic section $F \in \cohgp 0[X]{K_X
  \otimes L \otimes \aidlof|1|}$ with an estimate
\begin{equation*}
  \RTF|F|(1;\ell)[X,1] \leq \RTF|f|(0)[X,1] \; .
\end{equation*}
Note that the residue norm $\RTF*|f|(0;\ell)[X,1]$ is independent of
$\ell \geq e$ (see
\cite{Chan_on-L2-ext-with-lc-measures}*{Cor.~2.3.3}) and thus the
argument $\ell$ is omitted.

In order to achieve the goal stated at the end of Section
\ref{sec:ext-thm-with-estimates} (after Remark
\ref{rem:extension-thm-with-1-lc-measure}), one is led to the search
of a proof of the following conjecture.

\begin{conjecture}[\cite{Chan_on-L2-ext-with-lc-measures}*{Conj.~1.1.3}]
  Under the curvature assumption given in Theorem
  \ref{thm:Demailly-extension-with-estimate}, there exists a
  sufficiently large constant $\ell \geq e$ (depending only on the
  function $\psi$ and the constant $\delta$ in the curvature
  assumption) such that, for any integer $\sigma \geq 1$, every
  \begin{equation*}
    f \in \cohgp 0[\lcc.{m_k}]{\; K_X \otimes L \otimes \frac{
        \aidlof.{m_k}
      }{
        \aidlof-1.{m_k}
      }}
  \end{equation*}
  has a holomorphic extension $F \in \cohgp 0[X]{K_X \otimes L \otimes
    \aidlof.{m_k}}$ satisfying the estimate
  \begin{equation*}
    \RTF|F|(1; \ell)[\lcdata*<X>.{m_k},\sigma]
    \leq \RTF|f|(0)[\lcdata*<X>.{m_k},\sigma] \; .
  \end{equation*}
\end{conjecture}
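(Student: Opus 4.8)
We outline a possible line of attack for this conjecture; throughout assume $\vphi_L$ has neat analytic singularities, as is implicit in the statement, the residue functions and the sheaves $\aidlof.{m_k}$ requiring it. The plan is to reproduce the architecture of the index-$1$ case --- Theorem \ref{thm:Demailly-extension-with-estimate} together with \cite{Chan&Choi_ext-with-lcv-codim-1}*{Thm.~1.4.5} and Proposition \ref{prop:1-lcV=Ohvol} --- but to run the $L^2$ argument across all $\sigma$ directions transverse to a $\sigma$-lc centre \emph{simultaneously}. First, exactly as in the proof of Proposition \ref{prop:1-lcV=Ohvol}, pass to a log-resolution of $(X,\vphi_L,\psi)$ so that the triple is in an snc configuration and, via a partition of unity on $\lcc.{m_k}$, reduce to the situation where $f$ is represented on an admissible neighbourhood $V \isom U^\sigma \times W$ of a single $\sigma$-lc centre $\lcS =\set{z_1 =\dotsm =z_\sigma = 0}$ (with $V \cap S =\set{z_1\dotsm z_{\sigma_V}= 0}$ and $W \isom \lcS$), and with $\vphi_L +m_k\psi =\bphi +\phi_{S_0} +\phi_S$ decomposed as in \eqref{eq:potential-decomposition}.

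The task then splits into: (i) a \emph{local} step --- on each such $V$, choose a holomorphic extension $\rs f$ of $f$ (roughly an extension along the projection $\pr\colon V \isom U^\sigma \times W \to W$, adjusted so as to take values in $K_X \otimes L \otimes \aidlof.{m_k}$) and bound its residue function at $\eps = 1$ by its value at $\eps = 0$, that is, by the residue norm of $f$ on $\lcS$; and (ii) a \emph{global} step --- glue the local extensions $\rs f_j$ by a partition of unity $\set{\chi_j}$, truncate to a smooth section $F_0 :=\sum_j \chi_j \rs f_j$ on $X$, and correct the error by solving on all of $X$ a $\dbar$-equation $\dbar u =\dbar F_0$ with an $L^2$ bound for $u$ against the weight $\frac{e^{-\vphi_L -m_k\psi}}{\abs{\psi}^\sigma(\log\abs{\ell\psi})^2}$ that occurs in $\RTF|F|(1;\ell)[X,\sigma]$, after which $F :=F_0 - u$ is the sought holomorphic extension. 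The curvature hypothesis $\ibddbar\paren{\vphi_L +\beta\psi} \geq 0$ for $\beta \in [m_k, m_k +\delta]$ enters only in step (ii), through a twisted Bochner--Kodaira--Nakano inequality of Ohsawa--Takegoshi/Donnelly--Fefferman type, with $\delta$ and $\psi$ absorbed into how large $\ell$ must be chosen so that no multiplicative constant is left on the right --- exactly as for $\sigma = 1$.

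For the local step the natural mechanism is the residue-function formalism of \cite{Chan_on-L2-ext-with-lc-measures}: the model computation at the start of Section \ref{sec:residue-functions} --- $\sigma$-fold iterated integration by parts turning the index-$\sigma$ weight into a mixed $\sigma$-th order derivative and, in the limit $\eps \tendsto 0^+$, into the residue of $\abs{f}^2$ along $\set{z_1 =\dotsm =z_\sigma = 0}$ --- is the multi-variable analogue of the one-variable integration by parts carried out in the proof of Proposition \ref{prop:1-lcV=Ohvol}; it should, for a suitably chosen $\rs f$, yield the local estimate $\RTF|\rs f|(1;\ell)[V,\sigma] \leq \RTF|f|(0)[V,\sigma]$ (a multi-variable Hardy-type inequality generalising the one used for $\sigma = 1$), the analytic continuation of $\eps \mapsto \RTF|f|(\eps)[V,\sigma]$ from $\eps = 0$ to $\eps = 1$ being what turns the residue datum on $\lcS$ into an interior $L^2$ bound. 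In step (ii) the error $\dbar F_0 =\sum_j\dbar\chi_j \wedge \rs f_j$ is likewise dominated by $\RTF|f|(0)[X,\sigma]$ via the same computation, and once the estimate is in hand the limiting section $F =F_0 - u$ lies in $K_X \otimes L \otimes \aidlof.{m_k}$ by the very definition of that sheaf and represents $f$ in $K_X \otimes L \otimes \frac{\aidlof.{m_k}}{\aidlof-1.{m_k}}$ over $\lcc.{m_k}$.

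The chief difficulty one expects is the sharpness of the $L^2$-estimate with a \emph{universal} $\ell$ --- both the local Hardy-type inequality and, above all, the global twisted $\dbar$-solvability --- in the presence of the factor $\abs{\psi}^\sigma$, which is a higher power of $\abs\psi$ than the single power of the codimension-one case. For $\sigma = 1$ the $\dbar$-weight is the familiar Donnelly--Fefferman/Berndtsson weight; for $\sigma \geq 2$ one must absorb into the twisted curvature term the contributions of $\partial\dbar$ applied to the logarithm of $\abs{\psi}^\sigma(\log\abs{\ell\psi})^2$ while keeping the multiplicative constant equal to $1$, and with a single $\ell$ depending only on $\psi$, on $\delta$ and --- through $\sigma \leq n$ --- on $n$, not on $\vphi_L$, $m_k$, $f$ or $\sigma$. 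It is also worth noting that a naive induction --- extending $f$ one component of $S$ at a time up the chain $\lcS \subset D_1 \cap\dotsm\cap D_{\sigma-1} \subset \dotsm \subset D_1 \subset X$ by iterating the index-$1$ theorem --- does \emph{not} close: the output of each index-$1$ step is a weighted, hence strictly smaller, $L^2$ norm, whereas the following Ohsawa--Takegoshi step requires as input the \emph{un}weighted residue norm along the preceding centre, so the weight already earned cannot be carried forward and one is forced to produce the full weight in a single argument. Finally, even granting the estimate, one must check that all the corrections accumulated in the reduction --- the adjunction twists for the intermediate centres, the $S_0$- and $\bphi$-terms of \eqref{eq:potential-decomposition}, and the combinatorial factors $\pi^\sigma/(\nu_1\dotsm\nu_\sigma)$ and $1/(\sigma-1)!$ intrinsic to the index-$\sigma$ residue --- cancel exactly between the two sides, so that the right-hand constant can be taken to be $1$ rather than some $C(\sigma)$. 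Making these points precise is arguably the whole content of the conjecture; the residue functions of index $\sigma$ were built so that this should go through without loss, and the case $\sigma = 1$ of \cite{Chan&Choi_ext-with-lcv-codim-1} is the evidence that it does.
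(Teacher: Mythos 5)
You have not proved the statement, and indeed the paper does not prove it either: it is stated as a \emph{conjecture} (\cite{Chan_on-L2-ext-with-lc-measures}*{Conj.~1.1.3}), and the text explicitly says it ``is still open,'' offering only a local substitute (Theorem \ref{thm:local-L2-extension}). Your text is a research plan, not a proof, and you say as much yourself: the two steps that carry all the weight --- the local index-$\sigma$ estimate $\RTF|\rs f|(1;\ell)[V,\sigma] \leq \RTF|f|(0)[V,\sigma]$ and, above all, the global twisted $\dbar$-solvability against the weight $e^{-\vphi_L-m_k\psi}/\bigl(\abs{\psi}^\sigma(\log\abs{\ell\psi})^2\bigr)$ with multiplicative constant $1$ and a universal $\ell$ --- are named as ``the chief difficulty'' and left entirely unestablished. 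That second step is not a technical loose end to be tightened later; for $\sigma\geq 2$ the factor $\abs{\psi}^\sigma$ destroys the Donnelly--Fefferman/Berndtsson bookkeeping that makes the $\sigma=1$ case work, and no one (including the author of the paper) currently knows how to absorb the resulting curvature contributions. Writing ``it should yield the estimate'' at exactly the point where the known methods break down is the gap.

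Two further points of friction with what is actually known. First, your local step invokes a ``multi-variable Hardy-type inequality'' to get constant $1$; but the only local result available, Theorem \ref{thm:local-L2-extension}, is proved by direct computation with Taylor expansion (not by a $\dbar$- or Hardy-type argument) and yields the constant $2e^C$ with $C$ depending on $\bphi$, hence on $\vphi_L$ --- precisely not universal. So even the local half of your scheme is not established in the form you need. Second, your gluing step $F = F_0 - u$ requires $u$ to vanish on $\lcc.{m_k}$ to the correct order (i.e.\ $u$ must take values in $\aidlof-1.{m_k}$, not merely be small in $L^2$); this is usually forced by the divergence of the weight along the centre, but for the index-$\sigma$ weight this needs an argument, since $L^2$-finiteness against $\abs{\psi}^{-\sigma}$ near a $\sigma$-lc centre is exactly the defining condition of $\aidlof-1.{m_k}$ only after the analysis of \cite{Chan_adjoint-ideal-nas}*{Thm.~4.1.2} --- a point you assert ``by the very definition of that sheaf'' without checking. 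Your observation that the naive one-codimension-at-a-time induction does not close is correct and is a genuine insight, but identifying the obstruction is not the same as overcoming it.
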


If the conjecture holds true, then, in view of the short exact
sequence 
\begin{equation*}
  \renewcommand{\objectstyle}{\displaystyle}
  \xymatrix@R=0.1cm{
    {0} \ar[r] &
    {\frac{
        \aidlof-1.{m_k}
      }{
        \aidlof|0|.{m_k}
      }} \ar[r] &
    {\frac{
        \aidlof.{m_k}
      }{
        \aidlof|0|.{m_k}
      }} \ar[r] &
    {\frac{
        \aidlof.{m_k}
      }{
        \aidlof-1.{m_k}
      }} \ar[r] &
    {0}
  }
\end{equation*}
for any integer $\sigma \geq 1$, the procedure similar to the one
described at the end of Section \ref{sec:ext-thm-with-estimates} will
result in extensions of any $f$ taking values in
$\frac{\aidlof|\sigma_{\mlc}|.{m_k}}{\aidlof|0|.{m_k}}
=\frac{\mtidlof{\vphi_L+m_{k-1}\psi}}{\mtidlof{\vphi_L+m_k\psi}}$ with
estimates in terms of the residue norms.
The estimates thus obtained are compatible with the estimate in the
example of a bidisc obtained by Berndtsson in
\cite{Cao&Paun_OT-ext}*{\S A.3}.

While the conjecture is still open,
\cite{Chan_adjoint-ideal-nas}*{Thm.~1.2.3(3) and Cor.~4.3.2} guarantee
a local version of the statement.
\begin{thm}[\cite{Chan_adjoint-ideal-nas}*{Thm.~1.2.3(3) and
    Cor.~4.3.2}] \label{thm:local-L2-extension}
  Suppose that $\vphi_L$ has only neat analytic singularities and
  assume that $\lcdata<X>$ is in an snc configuration (as in the proof
  of Proposition \ref{prop:1-lcV=Ohvol}).
  Let $\bphi$ be the potential given in
  \eqref{eq:potential-decomposition}.
  Then, on any admissible open set $V \subset X$ (see
  \cite{Chan_adjoint-ideal-nas}*{\S 4.1}), given a constant $C \geq
  0$ such that
  \begin{equation} \label{eq:bphi-boundedness-assumption}
    \res{\bphi}_{\lcS} \leq \bphi + C \quad\text{ on } V
  \end{equation}
  for every $\sigma$-lc centre $\lcS$ (where $\res{\bphi}_{\lcS}$ is
  the restriction of $\bphi$ to $\lcS$ and treated as a function on
  $V$), every $f \in \cohgp 0[\cl V]{K_X \otimes L \otimes
    \frac{\aidlof}{\aidlof-1}}$ has a holomorphic extension
  $F \in \cohgp 0[\cl V]{K_X \otimes L \otimes \aidlof}$ satisfying the
  estimate
  \begin{equation*}
    \RTF|F|(\eps)[V,\sigma] \;\;\paren{=\RTF|F|(\eps ; e)[V,\sigma]}\;\;
    \leq 2 e^C \:\RTF|f|(0)[V,\sigma]
    \quad\text{ for all } \eps \geq 0 \; .
  \end{equation*}
\end{thm}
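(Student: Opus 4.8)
The plan is to reduce the statement, via the structure of admissible open sets, to a computation in explicit local coordinates, to produce the desired section $F$ by the most naive construction available, and then to read off the estimate from Fubini's theorem together with an iterated integration by parts --- precisely the mechanism of the model computation recalled at the start of Section~\ref{sec:residue-functions} and carried out, for $\sigma=1$, in the proof of Proposition~\ref{prop:1-lcV=Ohvol}. First I would fix coordinates on the admissible open set $V$ so that $V\isom U_1\times\dotsm\times U_\sigma\times W$, with the fixed $\sigma$-lc centre $\lcS$ meeting $V$ in $\set{z_1=\dotsm=z_\sigma=0}\isom W$, with $V\cap S=\set{z_1\dotsm z_{\sigma_V}=0}$ and $\res\psi_V=\sum_j\nu_j\log\abs{z_j}^2+\dotsm$ ($\nu_j>0$), exactly as in the proof of Proposition~\ref{prop:1-lcV=Ohvol}; this is what the snc-configuration hypothesis on $\lcdata<X>$ provides, and one further log-resolution may be performed so that $\divsr{\rs f_0'}+S$ is also snc, keeping every singularity of the integrands below along coordinate hyperplanes. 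Using the decomposition \eqref{eq:potential-decomposition}, $\vphi_L+m_k\psi=\bphi+\phi_{S_0}+\phi_S$, I would factor a local lifting of $f$ as $\rs f=\sect_0\cdot\rs f_0$ with $\rs f_0$ holomorphic (legitimate because $\bphi$ has no pole along any component of $S$, cf.~\cite{Chan_adjoint-ideal-nas}*{Remark 2.3.7}), which reduces the problem to extending $\rs f_0|_{\lcS}$ against the weight $e^{-\bphi}\abs{z_1}^{-2}\dotsm\abs{z_{\sigma_V}}^{-2}$ with the logarithmic pole $\abs{\psi}^{-\sigma}\paren{\log\abs{e\psi}}^{-1-\eps}$; the finitely many $\sigma$-lc centres through $V$ can be handled one at a time by a partition of unity, as in Proposition~\ref{prop:1-lcV=Ohvol}.

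For the extension itself I would take $F$ to be a fibrewise-constant extension of $\rs f_0|_{\lcS}$ along the projection $\pr\colon V\to W\isom\lcS\cap V$, multiplied back by $\sect_0$; this is visibly a holomorphic section over $\cl V$, and that it represents $f$ modulo $\aidlof-1$ is immediate from the local description of $\aidlof$ and $\aidlof-1$ in \cite{Chan_adjoint-ideal-nas}*{Thm.~4.1.2}. Passing to polar coordinates $(r_j,\theta_j)$ in each $z_j$-plane for $j=1,\dots,\sigma$, changing variables using $r_j^2\,\partial_{r_j^2}\psi>0$ on $V$, and integrating by parts successively in $r_1,\dots,r_\sigma$ --- exactly the chain of displayed equalities in the proof of Proposition~\ref{prop:1-lcV=Ohvol}, which is the case $\sigma=1$ --- I expect $\RTF|F|(\eps)[V,\sigma]$ to come out as a principal term, equal to the residue integral over $\lcS\cap V$ taken against the restricted weight $e^{-\res{\bphi}_{\lcS}}$ (that is, to $\RTF|f|(0)[V,\sigma]$ up to this substitution of weights, since $\rs f_0$ is constant in $z_1,\dots,z_\sigma$ there), plus remainder terms each carrying an extra factor $\paren{\log\abs{e\psi}}^{-\eps}$ or a boundary contribution along some $\set{r_j=0}$ that vanishes thanks to the auxiliary snc normalisation above.

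Two comparisons should then produce the stated constant. On the one hand, the hypothesis \eqref{eq:bphi-boundedness-assumption}, $\res{\bphi}_{\lcS}\leq\bphi+C$, converts the weight $e^{-\res{\bphi}_{\lcS}}$ in the principal term into the weight $e^{-\bphi}$ defining $\RTF|f|(0)[V,\sigma]$ at the cost of the factor $e^{C}$; on the other hand, the remainder terms from the $\sigma$-fold integration by parts should be dominated, uniformly in $\eps\geq0$, by the principal term, the total overshoot of $\eps\mapsto\RTF|F|(\eps)[V,\sigma]$ over its endpoint values $\RTF|f|(0)[V,\sigma]$ (at $\eps=0$) and $0$ (as $\eps\to+\infty$) being at most a factor $2$. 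Combining the two yields $\RTF|F|(\eps)[V,\sigma]\leq2e^{C}\,\RTF|f|(0)[V,\sigma]$ for all $\eps\geq0$; and since the left-hand side is then finite for every $\eps\geq0$, the characterisation of $\aidlof$ by finiteness of residue functions (as used for Corollary~\ref{cor:1-aidl=Demailly-intermediate-multidl} and in \cite{Chan_adjoint-ideal-nas}*{Thm.~4.1.2}) certifies that $F$ indeed lies in $K_X\otimes L\otimes\aidlof$. Equivalently, each of the $\sigma$ integrations by parts may be recast as an Ohsawa--Takegoshi extension across a single component of $S$, the fixed factor $2$ being the universal Ohsawa--Takegoshi constant and $\abs{\psi}^{-1}\paren{\log\abs{e\psi}}^{-1-\eps}$ playing the role of Ohsawa's negligible weight.

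The main obstacle is the uniform-in-$\eps$ control of the integration-by-parts remainders, especially in the two extreme regimes $\eps\to0^{+}$ and $\eps\to+\infty$: it is exactly here that the precise shape $\abs{\psi}^{-\sigma}\paren{\log\abs{e\psi}}^{-1-\eps}$ of the logarithmic pole must be exploited, and it is this bound (rather than any curvature input, which is unnecessary on an admissible open set) that pins the constant to $2e^{C}$. A secondary technical point is to arrange, through the auxiliary log-resolution, that $\divsr{\rs f_0'}$ never obstructs the repeated differentiations, so that the boundary terms along $\set{r_j=0}$ genuinely vanish --- the analogue of the cancellation marked in the proof of Proposition~\ref{prop:1-lcV=Ohvol}. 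Everything else is bookkeeping once that $\sigma=1$ model computation is in hand.
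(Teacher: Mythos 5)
The paper does not actually prove Theorem \ref{thm:local-L2-extension}: it imports it wholesale from \cite{Chan_adjoint-ideal-nas}*{Cor.~4.3.2} and only tells us that the proof there is ``a direct computation with the aid from Taylor expansion'' which ``does not use the machinery of the $L^2$ method''. Your overall route --- an explicit extension in admissible coordinates, Fubini plus iterated integration by parts modelled on the $\sigma=1$ computation of Proposition \ref{prop:1-lcV=Ohvol}, and the weight comparison $e^{-\res{\bphi}_{\lcS}}\leq e^{C}e^{-\bphi}$ to produce the factor $e^{C}$ --- is therefore consistent in spirit with what the paper says the proof is. But your closing reinterpretation of the argument as $\sigma$ successive Ohsawa--Takegoshi extensions is exactly what the paper disclaims, and more importantly there are two genuine gaps in the proposal as written.

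First, the construction of $F$. You propose to handle ``the finitely many $\sigma$-lc centres through $V$ one at a time by a partition of unity''. In Proposition \ref{prop:1-lcV=Ohvol} the partition of unity is applied to the \emph{test function} $g$, which is harmless because one is only computing a measure; here it would be applied to the \emph{section}, and gluing fibrewise-constant extensions taken along different projections with cutoff functions destroys holomorphy. On an admissible $V$ with $V\cap S=\set{z_1\dotsm z_{\sigma_V}=0}$ there are $\binom{\sigma_V}{\sigma}$ coordinate $\sigma$-lc centres meeting $V$, and a single fibrewise-constant extension from one of them has no reason to induce the given $f$ on the others. You need a genuinely holomorphic construction valid for all centres simultaneously --- e.g.\ starting from one holomorphic lifting $\rs f\in\aidlof(\cl V)$ of $f$ (which exists since $\cl V$ is a Stein compact) and truncating its Taylor expansion in $z_1,\dots,z_{\sigma_V}$; this is presumably where the ``Taylor expansion'' in the cited proof enters, and it is absent from your sketch. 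Second, the factor $2$: you correctly identify the uniform-in-$\eps$ control of the integration-by-parts remainders as ``the main obstacle'', but the statement that ``the total overshoot over the endpoint values is at most a factor $2$'' is an assertion, not an argument. In the exact model ($\psi=\log\abs{z}^2-1$, $\bphi$ constant in the transverse variables) the ratio is exactly $1$ for every $\eps$, so the $2$ must be extracted from quantitative bounds on how far the actual data ($r_j^2\partial_{r_j^2}\psi$ versus $\nu_j$, the non-product structure of $e^{-\bphi}$, the mixed $\sigma$-fold derivatives picking up unsigned cross terms) deviate from that model; none of this is carried out. Until those two points are supplied, the proposal is a plausible plan rather than a proof.
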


The proof of Theorem \ref{thm:local-L2-extension} does not use the
machinery of the $L^2$ method as in the proof of the Ohsawa--Takegoshi
extension theorem.
Instead, it is obtained through a direct computation with the aid from
Taylor expansion (see \cite{Chan_adjoint-ideal-nas}*{Cor.~4.3.2} for
the proof).
Note that the constant $C \geq 0$ in
\eqref{eq:bphi-boundedness-assumption} in the theorem exists for any
quasi-psh $\bphi$ having only neat analytic singularities with snc
such that $\bphi^{-1}(-\infty)$ contains no $\sigma$-lc centres of
$\lcdata<V>$.

Although the constant in the estimate is not ``universal'' (as $C$
depends on $\bphi$, hence $\vphi_L$), it is worth noting that, if
$\bphi$ is \emph{psh and toric} on $V$, the mean-value-inequality
yields (suppose that $\bphi$ depends only on $(z_1,z_2)$ and consider
an $1$-lc centre $\lcS|1|$ given by $\set{z_1 =0}$ for example)
\begin{equation*}
  \res{\bphi}_{\lcS|1|} =\bphi(0,z_2) \leq \frac{1}{2\pi}
  \int_0^{2\pi} \bphi(r_1 e^{\cplxi \theta_1}, z_2)
  \:d\theta_1 =\bphi(z_1, z_2) \; ,
\end{equation*}
that means, the constant $C$ can be chosen to be $0$. 
Moreover, if the psh potential $\bphi$ has more general singularities
and does satisfy \eqref{eq:bphi-boundedness-assumption} for some $C
\geq 0$, it can be shown that each member of its Bergman kernel
approximation $\seq{\dep[k]\bphi}_{k \in\Nnum}$ also satisfies
\eqref{eq:bphi-boundedness-assumption} with the same constant $C \geq
0$.
This fact may be useful to compensate for the loss of the universal
constant in some applications.

Using a partition of unity, Theorem
\ref{thm:local-L2-extension} guarantees a \emph{smooth (global)
  extension with estimate} for every $f \in \cohgp 0[\lcc]{K_X \otimes
  L \otimes \frac{\aidlof}{\aidlof-1}}$.
It is hoped that this would be useful in constructing the desired
global holomorphic extension with estimate.

%%% Local Variables:
%%% mode: latex
%%% TeX-master: "Adjoint-idl-sheaves-proceedings"
%%% coding: utf-8
%%% End:

%%%%% Reference list %%%%%
\begin{bibdiv}
  \begin{biblist}
    \IfFileExists{references.ltb}{
      \bibselect{references}
    }{
      \bib{Cao&Demailly&Matsumura}{article}{
  author={Cao, JunYan},
  author={Demailly, Jean-Pierre},
  author={Matsumura, Shinichi},
  title={A general extension theorem for cohomology classes on non reduced analytic subspaces},
  journal={Sci. China Math.},
  volume={60},
  date={2017},
  number={6},
  pages={949--962},
  issn={1674-7283},
  review={\MR {3647124}},
  doi={10.1007/s11425-017-9066-0},
}

\bib{Cao&Paun_OT-ext}{article}{
  author={Cao, Junyan},
  author={P\u aun, Mihai},
  title={On the Ohsawa--Takegoshi extension theorem},
  eprint={arXiv:2002.04968 [math.CV]},
  date={2020},
  note={with an appendix by Bo Berndtsson},
}

\bib{Chan_on-L2-ext-with-lc-measures}{article}{
  author={Chan, Tsz On Mario},
  title={On an $L^2$ extension theorem from log-canonical centres with log-canonical measures},
  journal={Math. Z.},
  volume={301},
  date={2022},
  number={2},
  pages={1695--1717},
  issn={0025-5874},
  review={\MR {4418335}},
  doi={10.1007/s00209-021-02890-9},
  eprint={https://rdcu.be/cFDPA},
  arxiv={2008.03019 [math.CV]},
  note={Numbering of cited sections and theorems follows the arXiv version},
}

\bib{Chan_adjoint-ideal-nas}{article}{
  author={Chan, Tsz On Mario},
  title={A new definition of analytic adjoint ideal sheaves via the residue functions of log-canonical measures I},
  date={2022},
  arxiv={2111.05006 [math.CV]},
}

\bib{Chan&Choi_ext-with-lcv-codim-1}{article}{
  author={Chan, Tsz On Mario},
  author={Choi, Young-Jun},
  title={Extension with log-canonical measures and an improvement to the plt extension of Demailly-Hacon-P\u {a}un},
  journal={Math. Ann.},
  volume={383},
  date={2022},
  number={3-4},
  pages={943--997},
  issn={0025-5831},
  review={\MR {4458394}},
  doi={10.1007/s00208-021-02152-3},
  eprint={https://rdcu.be/cn5N6},
  arxiv={1912.08076 [math.CV]},
}

\bib{Chan&Choi_injectivity-I}{article}{
  author={Chan, Tsz On Mario},
  author={Choi, Young-Jun},
  title={On an injectivity theorem for log-canonical pairs with analytic adjoint ideal sheaves},
  arxiv={2205.06954 [math.CV]},
  date={2022},
  note={accepted by Trans. Amer. Math. Soc.},
}

\bib{Demailly_on_OTM-extension}{article}{
  author={Demailly, Jean-Pierre},
  title={On the Ohsawa-Takegoshi-Manivel $L^2$ extension theorem},
  language={English, with English and French summaries},
  conference={ title={Complex analysis and geometry}, address={Paris}, date={1997}, },
  book={ series={Progr. Math.}, volume={188}, publisher={Birkh\"{a}user, Basel}, },
  date={2000},
  pages={47--82},
  review={\MR {1782659}},
}

\bib{Demailly_extension}{article}{
  author={Demailly, Jean-Pierre},
  title={Extension of holomorphic functions defined on non reduced analytic subvarieties},
  conference={ title={The legacy of Bernhard Riemann after one hundred and fifty years. Vol. I}, },
  book={ series={Adv. Lect. Math. (ALM)}, volume={35}, publisher={Int. Press, Somerville, MA}, },
  date={2016},
  pages={191--222},
  review={\MR {3525916}},
  arxiv={1510.05230 [math.CV]},
}

\bib{DHP}{article}{
  author={Demailly, Jean-Pierre},
  author={Hacon, Christopher D.},
  author={P\u {a}un, Mihai},
  title={Extension theorems, non-vanishing and the existence of good minimal models},
  journal={Acta Math.},
  volume={210},
  date={2013},
  number={2},
  pages={203--259},
  issn={0001-5962},
  review={\MR {3070567}},
  doi={10.1007/s11511-013-0094-x},
}

\bib{Guan&Li_adjIdl-not-coherent}{article}{
  author={Guan, Qi'an},
  author={Li, Zhenqian},
  title={Analytic adjoint ideal sheaves associated to plurisubharmonic functions},
  journal={Ann. Sc. Norm. Super. Pisa Cl. Sci. (5)},
  volume={18},
  date={2018},
  number={1},
  pages={391--395},
  issn={0391-173X},
  review={\MR {3799407}},
}

\bib{Guan&Zhou_openness}{article}{
  author={Guan, Qi'an},
  author={Zhou, Xiangyu},
  title={A proof of Demailly's strong openness conjecture},
  journal={Ann. of Math. (2)},
  volume={182},
  date={2015},
  number={2},
  pages={605--616},
  issn={0003-486X},
  review={\MR {3418526}},
  doi={10.4007/annals.2015.182.2.5},
}

\bib{Guan&Zhou_effective_openness}{article}{
  author={Guan, Qi'an},
  author={Zhou, Xiangyu},
  title={Effectiveness of Demailly's strong openness conjecture and related problems},
  journal={Invent. Math.},
  volume={202},
  date={2015},
  number={2},
  pages={635--676},
  issn={0020-9910},
  review={\MR {3418242}},
  doi={10.1007/s00222-014-0575-3},
}

\bib{Hiep_openness}{article}{
  author={Hiep, Pham Hoang},
  title={The weighted log canonical threshold},
  language={English, with English and French summaries},
  journal={C. R. Math. Acad. Sci. Paris},
  volume={352},
  date={2014},
  number={4},
  pages={283--288},
  issn={1631-073X},
  review={\MR {3186914}},
  doi={10.1016/j.crma.2014.02.010},
}

\bib{Hironaka}{article}{
  author={Hironaka, Heisuke},
  title={Resolution of singularities of an algebraic variety over a field of characteristic zero. I, II},
  journal={Ann. of Math. (2) {\bf 79} (1964), 109--203; ibid. (2)},
  volume={79},
  date={1964},
  pages={205--326},
  issn={0003-486X},
  review={\MR {0199184}},
}

\bib{KimDano-L2-ext-for-lc}{article}{
  author={Kim, Dano},
  title={$L^2$ extension for holomorphic functions for log canonical pairs},
  date={2022},
  arxiv={2108.11934v2 [math.CV]},
}

\bib{KimDano&Seo_jumping-numbers}{article}{
  author={Kim, Dano},
  author={Seo, Hoseob},
  title={Jumping numbers of analytic multiplier ideals},
  note={With an appendix by S\'{e}bastien Boucksom},
  journal={Ann. Polon. Math.},
  volume={124},
  date={2020},
  number={3},
  pages={257--280},
  issn={0066-2216},
  review={\MR {4085107}},
  doi={10.4064/ap190529-19-12},
}

\bib{KimDano&Seo_adj-idl}{article}{
  author={Kim, Dano},
  author={Seo, Hoseob},
  title={On $L^2$ extension from singular hypersurfaces},
  journal={Math. Z.},
  volume={303},
  date={2023},
  number={4},
  pages={89},
  issn={0025-5874},
  review={\MR {4562223}},
  doi={10.1007/s00209-023-03248-z},
  arxiv={2104.03554v5 [math.CV]},
}

\bib{Kollar_Sing-of-MMP}{book}{
  author={Koll\'{a}r, J\'{a}nos},
  title={Singularities of the minimal model program},
  series={Cambridge Tracts in Mathematics},
  volume={200},
  note={With a collaboration of S\'{a}ndor Kov\'{a}cs},
  publisher={Cambridge University Press, Cambridge},
  date={2013},
  pages={x+370},
  isbn={978-1-107-03534-8},
  review={\MR {3057950}},
  doi={10.1017/CBO9781139547895},
}

\bib{Manivel}{article}{
  author={Manivel, Laurent},
  title={Un th\'{e}or\`eme de prolongement $L^2$ de sections holomorphes d'un fibr\'{e} hermitien},
  language={French},
  journal={Math. Z.},
  volume={212},
  date={1993},
  number={1},
  pages={107--122},
  issn={0025-5874},
  review={\MR {1200166}},
  doi={10.1007/BF02571643},
}

\bib{Ohsawa-V}{article}{
  author={Ohsawa, Takeo},
  title={On the extension of $L^2$ holomorphic functions. V. Effects of generalization},
  journal={Nagoya Math. J.},
  volume={161},
  date={2001},
  pages={1--21},
  issn={0027-7630},
  review={\MR {1820210}},
  doi={10.1017/S0027763000022108},
}

\bib{Ohsawa&Takegoshi-I}{article}{
  author={Ohsawa, Takeo},
  author={Takegoshi, Kensh\={o}},
  title={On the extension of $L^2$ holomorphic functions},
  journal={Math. Z.},
  volume={195},
  date={1987},
  number={2},
  pages={197--204},
  issn={0025-5874},
  review={\MR {892051}},
  doi={10.1007/BF01166457},
}

\bib{Siu_inv_plurigenera2}{article}{
  author={Siu, Yum-Tong},
  title={Extension of twisted pluricanonical sections with plurisubharmonic weight and invariance of semipositively twisted plurigenera for manifolds not necessarily of general type},
  conference={ title={Complex geometry}, address={G\"{o}ttingen}, date={2000}, },
  book={ publisher={Springer, Berlin}, },
  date={2002},
  pages={223--277},
  review={\MR {1922108}},
}

    }
  \end{biblist}
\end{bibdiv}

\end{document}

%%% Local Variables:
%%% mode: latex
%%% TeX-master: t
%%% End: